\DeclareMathAlphabet{\pazocal}{OMS}{zplm}{m}{n}
\definecolor{dmagenta}{rgb}{.4,.1,.5}
\definecolor{dblue}{rgb}{.0,.0,.5}
\definecolor{mblue}{rgb}{.0,.0,.8}
\definecolor{ddblue}{rgb}{.0,.0,.4}
\definecolor{dred}{rgb}{.6,.0,.0}
\definecolor{dgreen}{rgb}{.0,.5,.0}
\definecolor{Eeom}{rgb}{.0,.0,.5}
\newtheorem{lemma}{Lemma}[section]
\newtheorem{theorem}{Theorem}[section]
\newtheorem{corollary}{Corollary}[section]
\theoremstyle{definition}
\theoremstyle{remark}
\newtheorem{remark}{Remark}[section]
\numberwithin{equation}{section}
\crefname{section}{Section}{Sections}
\crefname{subsection}{Subsection}{Subsections}
\crefname{condition}{Condition}{Conditions}
\crefname{hypothesis}{Hypothesis}{Conditions}
\crefname{assumption}{Assumption}{Assumptions}
\crefname{lemma}{Lemma}{Lemmas}
\crefname{claim}{Claim}{Claims}
\DeclareRobustCommand\widecheck[1]{{\mathpalette\@widecheck{#1}}}
\def\@widecheck#1#2{%
    \setbox\z@\hbox{\m@th$#1#2$}%
    \setbox\tw@\hbox{\m@th$#1%
       \widehat{%
          \vrule\@width\z@\@height\ht\z@
          \vrule\@height\z@\@width\wd\z@}$}%
    \dp\tw@-\ht\z@
    \@tempdima\ht\z@ \advance\@tempdima2\ht\tw@ \divide\@tempdima\thr@@
    \setbox\tw@\hbox{%
       \raise\@tempdima\hbox{\scalebox{1}[-1]{\lower\@tempdima\box
\tw@}}}%
    {\ooalign{\box\tw@ \cr \box\z@}}}
\newcommand{\df}{\coloneqq}
\DeclareMathOperator{\Exp}{\mathbb{E}} 
\DeclareMathOperator{\Prob}{\mathbb{P}} 
\newcommand{\D}{\mathrm{d}}          
\newcommand{\E}{\mathrm{e}}          
\newcommand{\RR}{\mathbb{R}}         
\newcommand{\Rd}{{\mathbb{R}^d}}       
\newcommand{\Ind}{\mathds{1}}            
\newcommand{\uuptau}{\Breve{\uptau}}
\newcommand{\grad}{\nabla}
\newcommand{\sB}{\mathscr{B}}    
\newcommand{\cC}{\pazocal{C}}     
\newcommand{\cD}{\pazocal{D}}     
\newcommand{\sD}{\mathcal{D}}
\newcommand{\sM}{\mathscr{M}}     
\newcommand{\abs}[1]{\lvert#1\rvert}
\newcommand{\norm}[1]{\lVert#1\rVert}
\newcommand{\babs}[1]{\bigl\lvert#1\bigr\rvert}
\DeclareMathOperator*{\esssup}{ess\,sup}
\DeclareMathOperator{\dist}{dist}
\begin{document}

\title[Location of maximizers of eigenfunctions]%
{Location of maximizers of eigenfunctions of fractional Schr\"{o}dinger's equations}

\author[Anup Biswas]{Anup Biswas$^\dag$}
\address{$^\dag$ Department of Mathematics,
Indian Institute of Science Education and Research,
Dr. Homi Bhabha Road, Pune 411008, India}
\email{anup@iiserpune.ac.in}

\date{}

\begin{abstract}
Eigenfunctions of the fractional Schr\"{o}dinger operators in a domain $\cD$ are considered, and a relation between the supremum of the
potential and the distance of a maximizer of the eigenfunction from $\partial\cD$ is established. This, in particular, extends a recent result of
Rachh and Steinerberger \cite{RacSte} to the fractional Schr\"{o}dinger operators. We also propose a fractional version of the Barta's inequality and 
also generalize a celebrated Lieb's theorem for fractional Schr\"{o}dinger operators. As applications of above results we obtain a Faber-Krahn
inequality for non-local Schr\"{o}dinger operators.
\end{abstract}

\keywords{Principal eigenvalue, nodal domain, fractional Laplacian, Barta's inequality, ground state, fractional Faber-Krahn, obstacle problems}

\subjclass[2000]{Primary: 35P15, 35P05, 35B38,}

\maketitle

\section{Introduction}
In this article we are interested in certain properties of eigenfunctions and eigenvalues of fractional Schr\"{o}dinger operators. These operators
have applications in quantum mechanics, for instance, they show up in relativistic Schr\"{o}dinger's equations \cite{CMS, Herbst}. Therefore, understanding
the properties of eigenvalues and eigenfunctions becomes important. Many properties that are known for Laplacian operator have been generalized to
the fractional Laplacian operators. We refer the readers to the recent review by Rupert L. Frank \cite{Frank} (see also \cite[Chapter~4]{Bogdan-etal}) for further motivations
 and recent developments in the study of eigenvalues of fractional Laplacian operators.

Let $X$ be the $d$-dimensional, $d\geq 2$, spherically symmetric $\alpha$-stable process with $\alpha\in (0, 2]$ defined on a complete probability space 
$(\Omega, \mathcal{F}, \Prob)$. In particular, 
$$\Exp\Bigl[\E^{i\xi\cdot (X_t-X_0)}\Bigr]\;=\; \E^{-t\abs{\xi}^\alpha}, \quad \text{for every}\; \xi\in\Rd, \; t\geq 0\,.$$
When $X_0=x$ we say that the $\alpha$-stable process $X$ starts at $x$. As well known for $\alpha=2$, $X$ is the $d$-dimensional Brownian motion, will be denoted by $W$,
running twice fast as the $d$-dimansional standard Brownian motion.
We denote the generator of $X$ by $-(-\Delta)^{\alpha/2}$ where 
\begin{align*}
-(-\Delta)^{\alpha/2}u(x)=\begin{cases}
c_{d, \alpha}\int_{\Rd} \bigl( u(x+y)-u(x)-\Ind_{\{\abs{y}\leq 1\}} y\cdot \grad u(x)\bigr)\frac{1}{\abs{y}^{d+\alpha}} \D{y}, &\text{if~~}\; \alpha\in(0, 2)\,,
\\[5pt]
\Delta u(x), & \text{if~~}\; \alpha=2\,.
\end{cases}
\end{align*}
It is also well known that the sample paths of $X$ are continuous if and only if $\alpha=2$. By $p(t, x, y)$ we denote the transition density of $X_t$ starting
at $x$. Unfortunately, no explicit expression of $p(t, x, y)$ is known unless $\alpha=1, 2$. 
But the following estimate is known for $\alpha\in(0, 2)$: there exists a positive constant $C$ such that
\begin{equation}\label{E1.1}
\frac{1}{C}\, \Bigl(t^{-\nicefrac{d}{\alpha}}\wedge \frac{t}{\abs{x-y}^{d+\alpha}}\Bigr)\;\leq\; p(t, x, y)\;\leq\;
C\, \Bigl(t^{-\nicefrac{d}{\alpha}}\wedge \frac{t}{\abs{x-y}^{d+\alpha}}\Bigr), \; (t, x, y)\in (0, \infty)\times\Rd\times\Rd\,.
\end{equation}
For a set $A\subset\Rd$, we denote the exit time of $X$ from $A$ by $\uptau(A)$ i.e.,
$$\uptau(A)\df \inf\{t>0\;\colon\; X_t\notin A\}\,.$$

Let $\cD$ be an open, connected set in $\Rd$ and $V$ be a bounded function on $\cD$. We are interested in the non-zero
solution $u\in \cC_b(\Rd)$, the space of real-valued bounded, continuous function on $\Rd$, of the following equation
\begin{equation}\label{E1.2}
\begin{aligned}
  -(-\Delta)^{\nicefrac{\alpha}{2}} u + V u &= \; 0\quad \text{in}\;\; \cD\,,\\
  u &=\; 0\quad \text{in}\;\; \cD^c\,. 
\end{aligned}
\end{equation}
For $\alpha=2$ the boundary condition in \eqref{E1.2} should be interpreted as $u=0$ on $\partial\cD$.
Note that we have not imposed any regularity condition on the boundary of $\cD$. However, throughout this article we assume the following without further mention
\begin{itemize}
\item[{\bf(H)}] $u$ has the Feynman-Kac representation in $\cD$, i.e., for any $x\in \cD$ we have
\begin{equation}\label{E1.3}
u(x)\;=\; \Exp_x\Bigl[\E^{\int_0^{t\wedge\uptau(\cD)} V(X_s)\, \D{s}} u(X_{t\wedge \uptau(\cD)})\Bigr]\,, \quad t>0\,.
\end{equation}
\end{itemize}
Here $\Exp_x[\cdot]$ denotes the expectation under the law of the process $X$ starting at $x$. The above hypothesis (H) is not very restrictive and
is satisfied under very mild assumptions on $\cD$ and $V$. For instance, let $V$ be locally H\"{o}lder continuous in $\cD$ and any point $z$ on 
$\partial\cD$ be regular in the sense that $\Prob_z(\uptau(\bar\cD)=0)=1$. The later is known to hold if $\cD$ has an exterior cone property.
On the other hand any viscosity solution of \eqref{E1.2} belongs to $\cC^{\alpha+}(K)$ for any compact set $K$ in $\cD$ (see \cite{RosSer}). Therefore, 
one can safely apply Feynman-Kac formula in any compact set $K\Subset\cD$ and get the representation \eqref{E1.3} with $\uptau(D)$ replaced by
$\uptau(K)$. Now if the boundary is regular we have $\uptau(\cD)=\uptau(\bar\cD)$, and therefore, if we consider a sequence of compact set which
increases to $\cD$ it is easy to obtain \eqref{E1.3} employing a limit argument. Moreover, for $\alpha=2$ the same argument works even when $\partial \cD$ is not regular in the above sense.

Representation \eqref{E1.3} plays a key role in the analysis of this article. Assume that $u\neq 0$ and $x_0\in\cD$ is a maximizer of 
$\abs{u}$ in $\cD$. We may also assume that $u(x_0)>0$, otherwise we multiply $u$ by $-1$. Then from \eqref{E1.3} we obtain that
\begin{align*}
u(x_0)\;=\; \Exp_{x_0}\Bigl[\E^{\int_0^{t} V(X_s)\, \D{s}} u(X_{t})\Ind_{\{\uptau(\cD)> t\}}\Bigr]
\;\leq\; u(x_0)\, \E^{t\, \norm{V}_\infty} \Prob_{x_0}(\uptau(\cD)> t)\, , \quad \forall\; t>0\,,
\end{align*}
where $\norm{V}_\infty\df\sup_{\cD}\abs{V}$. Thus we have
\begin{equation}\label{E1.4}
\E^{t\, \norm{V}_\infty} \Prob_{x_0}(\uptau(\cD)> t)\;\geq \; 1, \quad \text{for every}\; t>0\,.
\end{equation}
The main idea of all the proofs of this article is to exploit the probability in \eqref{E1.4}. Similar expression also appears in \cite[Section~4]{Steiner-14}
where this is used to study certain properties of the nodal domains of the eigenfunctions of Laplacian operator.

To state our first main result we need a few more notations.  We need a class of domains similar to \cite[Theorem~2]{Hayman}.
Let $\sD(d, \beta)$ be the collection of domains in $\Rd$ with the following property: for any
$z\in\partial\cD$ we have $\abs{\cD^c\cap B_{r}(z)}\geq \beta \abs{B_{r}(z)}>0$ for every $r>0$, where $B_{r}(z)$ denotes the ball of radius $r$ around the point $z$.
Note that bounded domains with uniform exterior sphere property belong to $\sD(d, \beta)$ for some $\beta>0$.
Our first main result is the following.

\begin{theorem}\label{T1.1}
Consider $\cD\in\sD(d, \beta)$ for some $\beta\in(0, 1)$. Let $u\neq 0$ satisfy \eqref{E1.2} in the domain $\cD$ and $\abs{u}$ attend its maximum at $x_0\in\cD$. Then there exists a constant $c$, dependent only on $d, \alpha, \beta$, satisfying
\begin{equation}\label{ET1.1A}
\dist(x_0, \partial\cD)\geq c\, \Bigl(\sup_{x\in\cD}\,\abs{V(x)}\Bigr)^{-\nicefrac{1}{\alpha}}\,.
\end{equation}
\end{theorem}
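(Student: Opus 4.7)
The strategy is to convert the Feynman-Kac inequality \eqref{E1.4} into the desired geometric bound by producing a matching upper bound on $\Prob_{x_0}(\uptau(\cD)>t)$ at a well-chosen scale. Writing $r\df\dist(x_0,\partial\cD)$, my aim is to establish that, at the parabolic scale $t=r^\alpha$,
\[
\Prob_{x_0}\bigl(\uptau(\cD)>r^\alpha\bigr)\;\leq\;1-c_0
\]
for some $c_0=c_0(d,\alpha,\beta)\in(0,1)$. Once this is in hand, pairing it with \eqref{E1.4} evaluated at the same time gives $\E^{r^\alpha\norm{V}_\infty}(1-c_0)\geq 1$, and therefore $r^\alpha\norm{V}_\infty\geq\log\tfrac{1}{1-c_0}$, which is precisely \eqref{ET1.1A}.

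To produce the upper bound I would select $z\in\partial\cD$ realizing the distance, $\abs{x_0-z}=r$, and examine the \emph{target set}
\[
F\;\df\;\cD^c\cap B_{2r}(z).
\]
The fatness hypothesis $\cD\in\sD(d,\beta)$, applied at the radius $2r$, gives $\abs{F}\geq\beta\abs{B_{2r}(z)}$. Moreover, every $y\in B_{2r}(z)$ satisfies $\abs{x_0-y}\leq\abs{x_0-z}+\abs{z-y}\leq 3r$, so the lower bound in \eqref{E1.1} evaluated at $t=r^\alpha$ produces
\[
p(r^\alpha,x_0,y)\;\geq\;\tfrac{1}{C}\bigl(r^{-d}\wedge 3^{-d-\alpha}r^{-d}\bigr)\;=\;\tfrac{3^{-d-\alpha}}{C}\,r^{-d}\qquad (y\in F).
\]
Integrating over $F$ yields $\Prob_{x_0}(X_{r^\alpha}\in F)\geq c_0(d,\alpha,\beta)$, and since $F\subset\cD^c$, the event $\{X_{r^\alpha}\in F\}$ forces $\uptau(\cD)\leq r^\alpha$. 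That is exactly the required upper bound on $\Prob_{x_0}(\uptau(\cD)>r^\alpha)$, and combining with \eqref{E1.4} closes the argument.

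Two minor housekeeping items remain. First, the sharp two-sided estimate \eqref{E1.1} is stated only for $\alpha\in(0,2)$; in the Brownian case $\alpha=2$ one substitutes the standard Gaussian lower bound, which again produces $p(r^2,x_0,y)\gtrsim r^{-d}$ uniformly on $B_{2r}(z)$, so the argument is unchanged. Second, one must verify that the explicit constant $c_0$ is strictly less than one, which is straightforward from the formula above (alternatively, one may simply replace $c_0$ by $c_0\wedge\tfrac{1}{2}$). Beyond this bookkeeping there is no real obstacle: the probabilistic half of the proof is already packaged in \eqref{E1.4}, and the geometric half reduces to a one-line application of the heat kernel bound together with the fatness assumption.
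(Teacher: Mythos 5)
Your proposal is correct and is essentially the paper's own proof: the paper rescales so that $\dist(x_0,\partial\cD)=1$ and applies the lower bound in \eqref{E1.1} at time $t=1$ to a fat portion of $\cD^c$ inside a ball centered at the nearest boundary point, which is exactly your computation at the parabolic scale $t=r^\alpha$ with the rescaling left implicit, followed by the same combination with \eqref{E1.4}. Your treatment of $\alpha=2$ via the Gaussian lower bound likewise matches the paper's remark that a similar estimate holds in that case.
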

One interesting consequence of Theorem~\ref{T1.1} is the following. By {\it inradius} of a domain $\cD$, denoted by $\mbox{inradius}(\cD)$, we mean the radius of the maximal sphere that can be inscribed inside $\overline\cD$.
\begin{corollary}\label{C1.1}
There exists a universal constant $c$, depending on $d, \alpha$, such that for any bounded, convex set $\cD$ with non-empty interior we have 
\begin{equation*}
\dist(x_0, \partial\cD)\geq c\, \Bigl(\sup_{x\in\cD}\abs{V(x)}\Bigr)^{-\nicefrac{1}{\alpha}}\,,
\end{equation*}
where $x_0$ is a  maximizer of $\abs{u}$ and $u\neq 0$ satisfies \eqref{E1.2}.
 In particular, \eqref{E1.2} does not have a non-zero solution in a bounded, convex domain $\cD$ if
$$\sup_{x\in\cD}\,\abs{V(x)}\;<\; \left[\frac{c}{\mathrm{inradius}(\cD)}\right]^\alpha\,.$$
\end{corollary}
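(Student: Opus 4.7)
The plan is to reduce Corollary~\ref{C1.1} to Theorem~\ref{T1.1} by verifying that every bounded convex domain with nonempty interior belongs to the class $\sD(d, \beta)$ for the universal choice $\beta = \tfrac{1}{2}$, which is independent of $d$.

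The key geometric observation is that for any $z \in \partial\cD$ with $\cD$ open and convex, the supporting hyperplane theorem furnishes a closed half-space $H^{+}$ containing $\cD$ with $z$ on its bounding hyperplane. The opposite open half-space $H^{-}$ is then contained in $\cD^c$, and it captures exactly half the volume of every ball $B_r(z)$, giving
\[
\abs{\cD^c \cap B_r(z)} \;\geq\; \abs{H^{-}\cap B_r(z)} \;=\; \tfrac{1}{2}\,\abs{B_r(z)} \;>\; 0.
\]
This is the only place where convexity enters. Invoking Theorem~\ref{T1.1} with $\beta = \tfrac{1}{2}$ then produces a constant $c = c(d, \alpha)$ such that any maximizer $x_0$ of $\abs{u}$ satisfies the first inequality of the corollary.

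For the ``in particular'' clause, I would simply remark that a maximizer $x_0 \in \cD$ of $\abs{u}$ satisfies $\dist(x_0, \partial\cD) \leq \mathrm{inradius}(\cD)$, because the open ball of radius $\dist(x_0, \partial\cD)$ centered at $x_0$ lies inside $\cD$ and hence its radius is bounded by the inradius. Substituting this into the first inequality and rearranging gives
\[
\sup_{x\in\cD}\abs{V(x)} \;\geq\; \Bigl[\frac{c}{\mathrm{inradius}(\cD)}\Bigr]^{\alpha}
\]
whenever a nonzero solution exists, and the contrapositive is precisely the claimed nonexistence statement. There is no real obstacle; the argument is entirely a packaging of Theorem~\ref{T1.1} with the elementary convex-geometric fact above, and the only thing worth checking carefully is that the supporting hyperplane indeed produces a half-ball's worth of exterior measure at every scale $r>0$, which follows directly from the translation invariance of Lebesgue measure on $\Rd$.
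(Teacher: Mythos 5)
Your proposal is correct and follows essentially the same route as the paper: take $\beta=\tfrac12$ for convex domains (the paper leaves the supporting-hyperplane justification implicit, which you spell out) and invoke Theorem~\ref{T1.1}, then compare $\dist(x_0,\partial\cD)$ with $\mathrm{inradius}(\cD)$ for the nonexistence statement. The only point the paper makes explicitly that you leave tacit is that boundedness of $\cD$ guarantees the maximizer $x_0$ of $\abs{u}$ actually exists, which is needed before the first inequality can be applied.
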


\begin{proof}
Observe that for a convex set $\cD$ we can take $\beta=\frac{1}{2}$, and therefore, the proof follows from Theorem~\ref{T1.1}.
 For the second part, boundedness of $\cD$ assures existence of a maximizer of $\abs{u}$.
\end{proof}

\begin{remark}
The exponent $-\nicefrac{1}{\alpha}$ is optimal in Theorem~\ref{T1.1} and Corollary~\ref{C1.1}. This can be immediately seen as follows. If we let $\cD$ to be $B_r(0)$, ball of radius
$r$ around $0$, then by Corollary~\ref{C1.1} we get 
$$\frac{c}{r} \;\leq \; \lambda^{\nicefrac{1}{\alpha}}_{1, \alpha} (r)\;\leq\; \lambda^{\nicefrac{1}{2}}_{1, 2}(r)\;=\; \frac{\lambda^{\nicefrac{1}{2}}_{1, 2}(1)}{r}\,,$$
where $\lambda_{1, \alpha}(r)$ denotes the principal eigenvalue of $(-\Delta)^{\nicefrac{1}{\alpha}}$ in $B_r(0)$, and the last inequality follows from \cite{Chen-Song}. This shows that the exponent $-\nicefrac{1}{\alpha}$ is optimal.
\end{remark}

Another application of Theorem~\ref{T1.1} gives a generalization to the Barta's inequality \cite{Barta, Sato} for fractional Laplacian in the convex domains.
Classical Barta's inequality provides a bound for the principal eigenvalue of Laplacian in bounded domains. In particular, 
by Barta's inequality one can take the constant
$c$ to be $1$ for $\alpha=2$, in Corollary~\ref{C1.2} below.
\begin{corollary}\label{C1.2}
There exists a universal constant $c$, depending on $d, \alpha$, such that for any bounded, convex domain $\cD$ and $u\in\cC^{\alpha+}_{\mathrm{loc}}(\cD)\cap\cC(\Rd), \, u=0$ on $\cD^c$,
we have
\begin{equation}\label{EC1.2A}
\lambda_{1, \alpha}(\cD)\;\leq c\; \esssup_{x\in\cD}\left|\frac{-(-\Delta)^{\nicefrac{\alpha}{2}} u(x)}{u(x)}\right|\,,
\end{equation}
where $\lambda_{1, \alpha}(\cD)$ denotes the principal eigenvalue of $(-\Delta)^{\nicefrac{\alpha}{2}}$ in $\cD$.
\end{corollary}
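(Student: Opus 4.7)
The strategy is to turn the given function $u$ into a solution of \eqref{E1.2} with a carefully chosen potential $V$, apply Corollary~\ref{C1.1} to pin down the location of a maximizer of $\abs{u}$, and finally convert the resulting lower bound on $\dist(x_0,\partial\cD)$ into an upper bound on $\lambda_{1,\alpha}(\cD)$ by combining domain monotonicity with the scaling of the principal eigenvalue on balls.

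More precisely, set
$$K \df \esssup_{x\in\cD}\left|\frac{-(-\Delta)^{\nicefrac{\alpha}{2}} u(x)}{u(x)}\right|.$$
If $K=+\infty$ there is nothing to prove, so assume $K<\infty$. Define
$$V(x) \df \frac{(-\Delta)^{\nicefrac{\alpha}{2}} u(x)}{u(x)} \quad \text{wherever } u(x)\neq 0,$$
and extend $V$ by $0$ on the remaining (measure-zero) set. Then $\norm{V}_\infty \leq K$ and $u\in\cC_b(\Rd)$ satisfies \eqref{E1.2} in $\cD$ with this $V$. Applying Corollary~\ref{C1.1} (whose hypothesis is met for bounded convex $\cD$) yields a maximizer $x_0$ of $\abs{u}$ in $\cD$ with
$$\dist(x_0,\partial\cD) \;\geq\; c_1\, K^{-\nicefrac{1}{\alpha}}$$
for a universal constant $c_1=c_1(d,\alpha)$. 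Writing $r=\dist(x_0,\partial\cD)$, the inclusion $B_r(x_0)\subset\cD$ together with domain monotonicity of the principal Dirichlet eigenvalue and the scaling $\lambda_{1,\alpha}(B_r(x_0))=r^{-\alpha}\lambda_{1,\alpha}(B_1(0))$ gives
$$\lambda_{1,\alpha}(\cD)\;\leq\;\lambda_{1,\alpha}(B_r(x_0))\;=\;r^{-\alpha}\lambda_{1,\alpha}(B_1(0))\;\leq\;\frac{\lambda_{1,\alpha}(B_1(0))}{c_1^{\alpha}}\, K,$$
and one takes $c\df c_1^{-\alpha}\lambda_{1,\alpha}(B_1(0))$.

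The one subtle point I expect is verification of hypothesis \textbf{(H)} for the potential $V$ just defined. In general $V$ need not be locally H\"older continuous, because $u$ may vanish somewhere in $\cD$. However, $V$ is bounded and measurable and $u\in\cC^{\alpha+}_{\mathrm{loc}}(\cD)\cap\cC(\Rd)$, so Dynkin's formula combined with the Feynman--Kac formula applies in any compact $K\Subset\cD$ to yield the representation \eqref{E1.3} with $\uptau(\cD)$ replaced by $\uptau(K)$; since a bounded convex domain has the exterior cone property, $\partial\cD$ is regular and $\uptau(\cD)=\uptau(\bar\cD)$, so an increasing exhaustion $K_n\uparrow\cD$ together with dominated convergence (using $u\in\cC_b(\Rd)$ and $\norm{V}_\infty\leq K$) delivers \eqref{E1.3}. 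With (H) verified, the rest of the argument is routine and reduces Corollary~\ref{C1.2} to Corollary~\ref{C1.1}.
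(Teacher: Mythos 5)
Your strategy coincides with the paper's in its first half: turn the quotient into a potential $V$ with $\abs{V}\leq K$, run the exit--time/Feynman--Kac argument behind Theorem~\ref{T1.1} and Corollary~\ref{C1.1} to get $\dist(x_0,\partial\cD)\geq c_1K^{-\nicefrac{1}{\alpha}}$, and then convert this into \eqref{EC1.2A}. Your endgame, however, is genuinely different and in fact cleaner than the paper's: you place the ball $B_r(x_0)\subset\cD$ with $r=\dist(x_0,\partial\cD)$ and use domain monotonicity plus the scaling $\lambda_{1,\alpha}(B_r)=r^{-\alpha}\lambda_{1,\alpha}(B_1(0))$, both immediate from the variational formula, whereas the paper detours through the inradius, the local eigenvalue $\lambda_{1,2}$ (via Hayman-type monotonicity) and the Chen--Song comparison $\lambda_{1,\alpha}^{\nicefrac{1}{\alpha}}\leq\lambda_{1,2}^{\nicefrac{1}{2}}$. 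Your route avoids these external inputs and works uniformly for $\alpha\in(0,2]$.

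The weak point is exactly the step you flag and then settle by assertion: the validity of the representation \eqref{E1.3} (equivalently, of the inequality \eqref{E1.4}) for this $V$, which is only bounded and measurable, is the technical heart of the paper's proof of this corollary, and "Dynkin's formula combined with the Feynman--Kac formula applies" cannot be cited off the shelf, since the paper's own justification of hypothesis (H) assumes $V$ locally H\"older continuous; as written, "applying Corollary~\ref{C1.1}" invokes a statement whose standing hypothesis has not been verified for your $V$. The paper closes this by mollification: with $V_\varepsilon=V\ast\varrho_\varepsilon$ and $F_\varepsilon=-(-\Delta)^{\nicefrac{\alpha}{2}}u-V_\varepsilon u$, it applies Feynman--Kac for the smooth potential on a subdomain, kills the error term as $\varepsilon\to0$ using $F_\varepsilon\to0$ a.e.\ together with the heat-kernel bound $p(s,x_0,y)\leq Cs^{-\nicefrac{d}{\alpha}}$, and then lets the subdomain exhaust $\cD$. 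Your sketch can indeed be completed without mollification, but only after supplying facts you do not state: since $u\in\cC^{\alpha+}_{\mathrm{loc}}(\cD)\cap\cC_b(\Rd)$, $(-\Delta)^{\nicefrac{\alpha}{2}}u$ is continuous in $\cD$, so the quotient is continuous on the open set $\{u\neq0\}\cap\cD$ and its pointwise supremum there equals the essential supremum (this is what legitimizes $\norm{V}_\infty\leq K$ with the paper's pointwise definition of $\norm{\cdot}_\infty$); finiteness of $K$ then forces $(-\Delta)^{\nicefrac{\alpha}{2}}u=0$ on all of $\{u=0\}\cap\cD$ (note this set need not have measure zero, contrary to your parenthetical, because the operator is nonlocal and $u$ may vanish on an open subset of $\cD$), so the equation $(-\Delta)^{\nicefrac{\alpha}{2}}u=Vu$ holds pointwise in $\cD$ with $\abs{V}\leq K$ everywhere; and finally, for bounded measurable $V$, Dynkin's formula for $u$ on compacts combined with the product rule applied to $\E^{\int_0^tV(X_s)\D{s}}u(X_t)$ yields \eqref{E1.3} up to the exit time of the compact, after which your exhaustion and dominated convergence argument is fine. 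With these points supplied your proof is complete; without them there is a genuine gap at precisely the place where the paper does its work.
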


For $\alpha=2$  and $d=2$ Rachh and Steinerberger first looked into the estimate
\eqref{ET1.1A} in \cite[Theorem~1]{RacSte}. In particular, they
proved the following.
\begin{theorem}[Rachh \& Steinerberger \cite{RacSte}]\label{T1.2}
Let $\alpha=2$ and $u$ satisfy \eqref{E1.2}. There exists a universal constant $c$ such that for all simply-connected domain $\cD\subset\RR^2$
we have
\begin{equation*}
\dist(x_0, \partial\cD)\geq c\, \Bigl(\sup_{x\in\cD}\,\abs{V(x)}\Bigr)^{-\nicefrac{1}{2}}\,,
\end{equation*}
where $x_0$ is a maximizer of $\abs{u}$.
\end{theorem}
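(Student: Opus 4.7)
\noindent\emph{Proof plan for Theorem~\ref{T1.2}.}\;
The strategy is to follow the same Feynman--Kac to exit-time reduction used for Theorem~\ref{T1.1}, but to replace the boundary-thickness class $\sD(d,\beta)$ by a two-dimensional tool available for any simply-connected $\cD\subset\RR^2$, namely Beurling's projection theorem. At a maximizer $x_0$ of $\abs{u}$ (we may assume $u(x_0)>0$), inequality \eqref{E1.4} for $\alpha=2$ reads
$$\E^{t\norm{V}_\infty}\,\Prob_{x_0}\bigl(\uptau(\cD)>t\bigr)\;\geq\;1,\qquad t>0.$$
The task therefore reduces to a universal upper bound on the Brownian exit-time tail in terms of $r\df\dist(x_0,\partial\cD)$ that is insensitive to the irregularities of $\partial\cD$ allowed by simple connectivity.

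For this I would establish the following Beurling-type estimate: for every simply-connected $\cD\subset\RR^2$,
\begin{equation}\label{EplanBeurling}
\Prob_{x_0}\bigl(\uptau(\cD)>t\bigr)\;\leq\;C\,\frac{r}{\sqrt{t}},\qquad t>0,
\end{equation}
with $C$ a universal constant. Beurling's projection theorem identifies the slit plane $\cD^\star\df\RR^2\setminus[0,\infty)$ with starting point $-r$ as the extremal configuration: deforming the boundary of $\cD$ onto a radial segment through the circular projection only increases the exit probability. For $\cD^\star$ itself, the conformal map $z\mapsto\sqrt{z}$ sends $-r$ to $i\sqrt{r}$ and $\cD^\star$ to the upper half-plane, and together with Brownian scaling $\Prob_{-r}(\uptau(\cD^\star)>t)=G(t/r^2)$ for a single function $G$ with $G(s)\leq\min(1,C' s^{-1/2})$, yields \eqref{EplanBeurling} with a dimensionless constant.

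Combining \eqref{EplanBeurling} with the Feynman--Kac inequality gives $\sqrt{t}\,\E^{-t\norm{V}_\infty}\leq Cr$ for every $t>0$. Choosing $t=1/(2\norm{V}_\infty)$, at which the left-hand side is maximized, yields $r\geq c\,\norm{V}_\infty^{-\nicefrac{1}{2}}$ with $c=(C\sqrt{2\E})^{-1}$, which is the claim.

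The main obstacle is making \eqref{EplanBeurling} rigorous with a constant independent of $\cD$, especially when $\cD$ is unbounded or has a highly irregular boundary. Beurling's projection is classically formulated for harmonic measure in bounded simply-connected domains, so one first truncates to $\cD\cap B_R(x_0)$, applies the projection there, and then lets $R\to\infty$; the translation from the resulting harmonic-measure inequality to the exit-time functional then relies on conformal invariance of planar Brownian motion together with the random time change $t\mapsto \int_0^t\abs{\phi'(B_s)}^2\D{s}$. This bookkeeping is the technical heart of the argument; the planar-geometric input, namely that the slit plane is extremal, is what gives the universal constant at the end.
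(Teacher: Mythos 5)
Your overall strategy differs from the paper's: the paper, after the same Feynman--Kac reduction to the bound $\E^{t\norm{V_R}_\infty}\Prob_0(\uptau_R>t)\geq 1$, obtains the universal bound $\Prob_0(\uptau_R>1)\leq 1-\delta$ by a purely topological argument --- it builds a fixed $\cC^1$ loop around the nearest boundary point $z_0$, invokes the Stroock--Varadhan support theorem to give the Brownian path a universal probability $\delta$ of staying in a tube around that loop, and shows that such a path must leave $\cD_R$ since otherwise one could close up a simple loop in $\cD_R$ encircling $z_0$, contradicting simple connectivity. Your route through a quantitative Beurling-type survival estimate is a legitimate, more quantitative alternative, but as written it contains a genuine error: the claimed bound $\Prob_{x_0}(\uptau(\cD)>t)\leq C\,r/\sqrt{t}$ is false, and it fails precisely for the configuration you call extremal. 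The slit plane $\RR^2\setminus[0,\infty)$ is a cone of opening angle $2\pi$, for which the exit-time tail decays like $t^{-\nicefrac{1}{4}}$ (Spitzer/Burkholder), so by scaling $\Prob_{-r}\bigl(\uptau(\cD^\star)>t\bigr)\asymp (r/\sqrt{t})^{\nicefrac{1}{2}}$, which is much larger than $r/\sqrt{t}$ for $t\gg r^2$; in particular your claim $G(s)\leq\min(1,C's^{-\nicefrac{1}{2}})$ is wrong, the true decay of $G$ being $s^{-\nicefrac{1}{4}}$. The error is reparable: the correct Beurling-type bound $\Prob_{x_0}(\uptau(\cD)>t)\leq C\,(r/\sqrt{t})^{\nicefrac{1}{2}}$ still gives the theorem, since with $t=\norm{V}_\infty^{-1}$ the inequality $1\leq \E\,C\,r^{\nicefrac{1}{2}}\norm{V}_\infty^{\nicefrac{1}{4}}$ yields $r\geq c\,\norm{V}_\infty^{-\nicefrac{1}{2}}$.

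A second, related gap is the passage from Beurling's projection theorem to the time-dependent estimate. Beurling's theorem is a statement about harmonic measure (where the path exits), not about the exit-time law, and conformal invariance changes the clock, so "the slit plane is extremal for $\Prob(\uptau>t)$" does not follow from it in the way you describe. The standard repair is to decompose the event $\{\uptau(\cD)>t\}$ according to whether the path leaves $B_{\sqrt{t}}(x_0)$ before time $t$: the spatial part is controlled by the classical Beurling estimate (probability $\leq C(r/\sqrt{t})^{\nicefrac{1}{2}}$ of reaching $\partial B_{\sqrt{t}}(x_0)$ without hitting the connected set $\cD^c$, which joins $\partial B_r(x_0)$ to $\partial B_{\sqrt{t}}(x_0)$ because the complement of a simply connected proper subdomain of $\RR^2$ is connected in the sphere and contains $\infty$), while the confinement part $\Prob_{x_0}(\uptau(B_{\sqrt{t}}(x_0))>t)$ is bounded by a universal constant (indeed by $\nicefrac{1}{4}$ via the expected exit time), with an iteration over dyadic time scales if one wants the clean tail bound. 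With these two corrections your argument goes through; what it buys over the paper's proof is an explicit, quantitative constant in place of the non-explicit $\delta$ produced by the support theorem, at the cost of importing the planar Beurling machinery.
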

The main idea of the proof again uses \eqref{E1.4}. In \cite{RacSte} the authors give some heuristic to justify a universal upper bound on the probability
$\Prob_{x_0}(\uptau(\cD)>1)$, independent of the domain $\cD$. 
In Section~\ref{S-proof} we shall revisit this proof and give a rigorous argument for this upper bound. Theorem~\ref{T1.2} is a refined
version of a problem of P\'{o}lya and Szeg\"{o} \cite{PolSz}, raised in 1951, which states existence of a universal constant $c_1$ such that for
any simply connected domain $\cD\subset\RR^2$ one has
$$\mbox{inradius}(\cD)\geq c_1\, \lambda_{1,2}(\cD)^{-\nicefrac{1}{2}}\,.$$
Here $\lambda_{1,2}(\cD)$ denotes the principal eigenvalue of the Laplacian in $\cD$ with Dirichlet boundary condition.
This problem of P\'{o}lya and Szeg\"{o} was independently solved
by Hayman \cite{Hayman} and Makai \cite{Makai}. It is also noted by Hayman in \cite{Hayman} that it is not possible to bound the inradius
of $\cD\subset\Rd$ by the principal eigenvalue $\lambda_{1,2}(\cD)$ if $d\geq 3$.
For example, one can remove lines from $\cD$ which decreases the inradius but
does not affect the value of $\lambda_{1,2}(\cD)$. Note that removal of lines does not affect the measure of the domain $\cD$. 
Recall that $\lambda_{1, \alpha}(\cD)$ denotes the principal eigenvalue of $(-\Delta)^{\nicefrac{\alpha}{2}}$ in $\cD$. It is  known from the fractional Faber-Krahn inequality that
$$\lambda_{1, \alpha}(\cD) \;\geq \; C_{d, \alpha}\, \abs{\cD}^{-\nicefrac{\alpha}{d}},$$
where $C_{d, \alpha}$ is a constant and $\abs{\cdot}$ denotes the Lebesgue measure on $\Rd$.
The optimal value of the constant $C_{d, \alpha}$ is attained when $\cD$ is a ball.
Note that if $\lambda_{1, \alpha}(\cD)$ is small then the Lebesgue
measure of $\cD$ has to be large. In fact, $\cD$ has to be {\it fat} in the sense of Lieb \cite{Lieb}. 
More precisely, it is shown in \cite[Corollary~2]{Lieb}
 that for $\alpha=2$ and any $\varepsilon\in(0,1)$,
one can find a constant $r_\varepsilon$, dependent only on $d$, such that for any $\cD\subset\Rd$ there exists a point $x\in\Rd$ satisfying
$$\abs{\cD\cap B_x} \geq \; (1-\varepsilon) \,\abs{B_x},$$
where $B_x$ is the ball of radius $r_\varepsilon \lambda_{1, 2}(\cD)^{-\nicefrac{1}{2}}$ around the point $x$.
By drawing motivation from \cite{Steiner-14}, the above result is refined by
Georgiev \& Mukherjee \cite{GM}
who show that one can choose $x=x_0$ as the centre of $B_x$ where $x_0$ is a maximizer of $\abs{u}$.
Using the ideas of \cite{GM}, Lieb's result is further generalized to bounded potential $V$ by Rachh \& Steinerberger in \cite[Theorem~2]{RacSte}.
Our next result generalizes this Lieb's theorem for Laplace operator to fractional Schr\"{o}dinger operators. Since Lieb's theorem is known for
$\alpha=2$, we only consider $\alpha\in(0, 2)$ for the following result.

\begin{theorem}\label{T1.3}
Let $u$ be a non-zero solution to \eqref{E1.2} and $x_0$ be a maximizer of $\abs{u}$. Let $\norm{V}_\infty$ be positive and $\alpha\in(0, 2)$.
Then for any $\epsilon\in(0,1)$, there exists positive
$r_\epsilon$, dependent on $\varepsilon, d, \alpha$ but not on $\norm{V}_\infty, \cD, x_0$, satsifying 
$$\abs{\cD\cap B_{x_0}}\geq (1-\epsilon)\, \abs{B_{x_0}}\,,$$
where $B_{x_0}$ is the ball around $x_0$ of radius $r_\epsilon \norm{V}_\infty^{-\nicefrac{1}{\alpha}}$.
\end{theorem}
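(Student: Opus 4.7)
The plan is to use \eqref{E1.4} once more: taking $t=t_\star$ it yields $\Prob_{x_0}(\uptau(\cD)>t_\star)\geq \E^{-t_\star\norm{V}_\infty}$. Arguing by contradiction, suppose $\abs{\cD^c\cap B_R(x_0)}>\epsilon\abs{B_R(x_0)}$ with $R=r_\epsilon\norm{V}_\infty^{-\nicefrac{1}{\alpha}}$. I will show this forces $r_\epsilon\geq c_{d,\alpha}\epsilon^{\nicefrac{1}{\alpha}}$ for an explicit constant $c_{d,\alpha}>0$; choosing $r_\epsilon$ strictly below this value then proves the theorem.

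The mechanism exploits the pure-jump structure of $X$, which distinguishes $\alpha\in(0,2)$ from the Brownian case. First, setting $\rho=(\epsilon/2)^{\nicefrac{1}{d}}$, so that $\abs{B_{\rho R}(x_0)}=(\epsilon/2)\abs{B_R(x_0)}$, produces
\begin{equation*}
\abs{\cD^c\cap(B_R(x_0)\setminus B_{\rho R}(x_0))}\;\geq\;\tfrac{\epsilon}{2}\,\abs{B_R(x_0)};
\end{equation*}
this annular set is the target into which I want $X$ to jump. Next, by L\'evy--It\^o write $X_t-x_0=M_t+Z_t$, where $Z$ is the compound Poisson built from jumps of magnitude $\geq\rho R/2$, with rate $\lambda=C_1(d,\alpha)(\rho R)^{-\alpha}$ and normalized jump law $\nu/\lambda$ for $\nu(\D y)=c_{d,\alpha}\abs{y}^{-d-\alpha}\Ind_{\abs{y}\geq\rho R/2}\,\D y$, and $M$ is the independent symmetric L\'evy martingale built from the smaller jumps, satisfying $\Exp[\abs{M_t}^2]\leq c_{d,\alpha}\,t(\rho R)^{2-\alpha}$.

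Let $\sigma_Z$ denote the first jump time of $Z$ (an exponential variable of rate $\lambda$), and pick $t_\star=c_0(\rho R)^\alpha$ with $c_0=c_0(d,\alpha)>0$ chosen small enough so that both $\lambda t_\star\leq 1/2$ and $\Prob(\sup_{s\leq t_\star}\abs{M_s}>\rho R/4)\leq 1/2$, the latter by Doob's $L^2$ maximal inequality applied to $M$. Independence of $M$ and $Z$ gives $\Prob(\sigma_Z\leq t_\star,\,\sup_{s\leq t_\star}\abs{M_s}\leq\rho R/4)\geq c_1(d,\alpha)>0$. On this event $X_{\sigma_Z-}=x_0+z$ with $\abs{z}\leq\rho R/4$, and $\Delta Z_{\sigma_Z}$ has law $\nu/\lambda$ independently of $\sigma_Z$ and $M$. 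For every $y'$ in the target annulus, $\rho R/2\leq\abs{y'-(x_0+z)}\leq 5R/4$, so the density of $x_0+z+\Delta Z_{\sigma_Z}$ at $y'$ is bounded below by $C_2(d,\alpha)R^{-d-\alpha}/\lambda$, and the conditional probability that the big jump lands in the target set is at least $\lambda^{-1}C_2R^{-d-\alpha}\cdot\tfrac{\epsilon}{2}\abs{B_R(x_0)}=C_3(d,\alpha)\epsilon\rho^\alpha=C_3\epsilon(\epsilon/2)^{\nicefrac{\alpha}{d}}$. Multiplying, $\Prob_{x_0}(\uptau(\cD)\leq t_\star)\geq c(d,\alpha)\epsilon^{1+\nicefrac{\alpha}{d}}$.

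Combining with \eqref{E1.4} and the elementary inequality $-\log(1-x)\geq x$ yields $t_\star\norm{V}_\infty\geq c(d,\alpha)\epsilon^{1+\nicefrac{\alpha}{d}}$. Since $t_\star\norm{V}_\infty=c_0(\rho R)^\alpha\norm{V}_\infty=c_0(\epsilon/2)^{\nicefrac{\alpha}{d}}r_\epsilon^\alpha$, rearranging gives $r_\epsilon^\alpha\geq c'_{d,\alpha}\epsilon$, that is $r_\epsilon\geq c_{d,\alpha}\epsilon^{\nicefrac{1}{\alpha}}$, contradicting any choice strictly below this threshold. The main point I expect to treat with care is the range $\alpha\in[1,2)$, where the small-jump part of the L\'evy--It\^o decomposition must be compensated so that $M$ is a genuine $L^2$ martingale (for $\alpha<1$ symmetry lets one skip compensation); once the symmetric framework is set up, the independence $M\perp Z$ and the formula $\Delta Z_{\sigma_Z}\sim\nu/\lambda$ are standard, and the remaining work is tracking constants.
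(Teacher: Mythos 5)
Your proposal is correct and reaches the same quantitative conclusion as the paper, but by a genuinely different technical implementation. Both arguments rest on the same two pillars: the Feynman--Kac bound \eqref{E1.4}, which gives $\Prob_{x_0}(\uptau(\cD)\leq t)\leq 1-\E^{-t\,\norm{V}_\infty}$, and the idea that if $\cD^c$ fills a fraction $\epsilon$ of a ball of radius $R\sim \norm{V}_\infty^{-\nicefrac{1}{\alpha}}$ around $x_0$, then within a time of order $R^\alpha$ the process jumps into $\cD^c$ with probability bounded below by a power of $\epsilon$. The paper implements the second pillar through the L\'evy system formula of Bass--Levin, expressing the probability of jumping into $E_r=B_r\cap\cD^c$ before leaving $B_r$ and before time $\hat t$ as an occupation-time integral against the jump kernel, and then bounds that occupation time from below via the ball exit-time estimate of Lemma~\ref{L2.1}; you instead use the L\'evy--It\^o decomposition at threshold $\rho R/2$, control the small-jump martingale by Doob's $L^2$ inequality, and exploit the explicit exponential law of the first big-jump time together with the jump law $\nu/\lambda$. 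The excision of the concentric ball $B_{\rho R}$ is a step with no analogue in the paper (the L\'evy system bound covers $E_r$ wherever it sits inside $B_r$), but it costs you only the harmless factor $(\epsilon/2)^{\nicefrac{\alpha}{d}}$, which cancels in the final rearrangement. What your route buys is self-containedness (no L\'evy system formula, no Lemma~\ref{L2.1}) and an explicit rate $r_\epsilon\sim\epsilon^{\nicefrac{1}{\alpha}}$, which the paper's proof yields only implicitly through its choice of $\kappa_2$ and $t_0$; the contradiction framing versus the paper's direct choice of constants is cosmetic. When writing it up, state explicitly that $M$ and $Z$ almost surely have no common jump time, so $\Delta X_{\sigma_Z}=\Delta Z_{\sigma_Z}$ and $X_{\sigma_Z}=x_0+M_{\sigma_Z}+\Delta Z_{\sigma_Z}$ with $\abs{M_{\sigma_Z}}\leq\rho R/4$ on your event; this is exactly what your density computation uses, and, as you note, symmetry makes the compensation of the small jumps vanish for all $\alpha\in(0,2)$, so $M$ is indeed an $L^2$ martingale.
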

Recently, De Carli \& Hudson \cite{Carli-Hudson} have proved the Faber-Krahn inequality for the eigenfunctions of the Schr\"{o}dinger operator
$-\Delta + V$, and shown that for
a bounded domain $\cD$ with a non-zero solution $u$ satisfying $\Delta u + Vu=0$ in $\cD$, $u=0$ on $\partial\cD$, one has
$$\abs{D}\, \norm{V}^{\nicefrac{d}{2}}_\infty\;\geq \; j^d\, \abs{B_1(0)}\,,$$
where $j=j_{\frac{d}{2}-1}$ is the first zero of the Bessel function $J_{\frac{d}{2}-1}(x)$. It is also shown in \cite{Carli-Hudson} that the constant appearing on
the RHS is sharp. We can generalize this result to the fractional Schr\"{o}dinger operators using Theorem~\ref{T1.3}.

\begin{corollary}
Let $\alpha\in (0,2)$. Suppose that $\cD$ is a bounded domain and $u$ satisfy \eqref{E1.2}. Then there exists a universal constant $c$, depending on $d, \alpha$ but not on $\cD$,
such that
$$\abs{D}\, \norm{V}^{\nicefrac{d}{\alpha}}_\infty\;\geq \; c\,.$$
\end{corollary}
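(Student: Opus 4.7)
The plan is to derive this corollary as a direct consequence of Theorem~\ref{T1.3}, which already does the heavy lifting. There is essentially no new ingredient beyond a measure-theoretic bookkeeping of the conclusion of Theorem~\ref{T1.3}.

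First, I would observe that the statement is only meaningful when there is a non-zero solution $u$ of \eqref{E1.2}, so assume $u\not\equiv 0$. In that case $\norm{V}_\infty>0$: otherwise $V\equiv 0$ and \eqref{E1.2} reads $(-\Delta)^{\alpha/2}u=0$ in $\cD$ with $u=0$ on $\cD^c$, which by the Feynman-Kac representation \eqref{E1.3} forces $u(x)=\Exp_x[u(X_{\uptau(\cD)})]=0$ for every $x\in\cD$ because $X_{\uptau(\cD)}\in\cD^c$ almost surely. Next, since $\cD$ is bounded, $\abs{u}$ attains its maximum at some point $x_0\in\cD$, so the hypotheses of Theorem~\ref{T1.3} are met.

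Fix any $\epsilon\in(0,1)$, say $\epsilon=\tfrac{1}{2}$, and let $r_\epsilon>0$ be the constant produced by Theorem~\ref{T1.3}. Writing $\omega_d\df\abs{B_1(0)}$, the ball $B_{x_0}$ of radius $r_\epsilon\norm{V}_\infty^{-1/\alpha}$ satisfies
\begin{equation*}
\abs{B_{x_0}}\;=\;\omega_d\,r_\epsilon^{\,d}\,\norm{V}_\infty^{-d/\alpha}\,.
\end{equation*}
Then Theorem~\ref{T1.3} gives
\begin{equation*}
\abs{\cD}\;\geq\;\abs{\cD\cap B_{x_0}}\;\geq\;(1-\epsilon)\,\abs{B_{x_0}}\;=\;(1-\epsilon)\,\omega_d\,r_\epsilon^{\,d}\,\norm{V}_\infty^{-d/\alpha},
\end{equation*}
and multiplying through by $\norm{V}_\infty^{d/\alpha}$ yields
\begin{equation*}
\abs{\cD}\,\norm{V}_\infty^{d/\alpha}\;\geq\;(1-\epsilon)\,\omega_d\,r_\epsilon^{\,d}\;\eqqcolon\;c,
\end{equation*}
with $c$ depending only on $d$ and $\alpha$.

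Since the corollary is a straightforward consequence once Theorem~\ref{T1.3} is in hand, there is no real obstacle in this step. The only minor point worth flagging is the need to rule out $\norm{V}_\infty=0$ (handled above via Feynman-Kac) and to confirm the existence of a maximizer $x_0$ (immediate from boundedness of $\cD$ and continuity of $u$). If one cares about a slightly better constant, one could optimize over $\epsilon\in(0,1)$ after inspecting the explicit dependence of $r_\epsilon$ on $\epsilon$ obtained in the proof of Theorem~\ref{T1.3}, but no new idea is required.
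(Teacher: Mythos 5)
Your proof is correct and follows essentially the same route as the paper: apply Theorem~\ref{T1.3} with $\epsilon=\tfrac{1}{2}$ at a maximizer $x_0$ and read off $\abs{\cD}\geq\tfrac{1}{2}\abs{B_{x_0}}=\tfrac{1}{2}\omega_d r_\epsilon^d\norm{V}_\infty^{-d/\alpha}$. Your additional checks (that $\norm{V}_\infty>0$ via Feynman--Kac, and that a maximizer exists) are sensible housekeeping the paper leaves implicit, but they do not change the argument.
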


\begin{proof}
Let $x_0$ be a maximizer of $\abs{u}$ in $\cD$ and we let $\varepsilon=\frac{1}{2}$. Let $B$ be the ball of radius $r_\varepsilon \norm{V}_\infty^{-\nicefrac{1}{\alpha}}$ around $x_0$. 
Then by Theorem~\ref{T1.3} we have
$$\abs{\cD}\;\geq\; \abs{\cD\cap B}\;\geq\; \frac{1}{2}\abs{B}=\frac{r^d_\varepsilon\, \abs{B_1(0)}}{2}\norm{V}_\infty^{-\nicefrac{d}{\alpha}}\,.$$
Hence the proof.
\end{proof}

Before we conclude this section let us mention another interesting application of the Feynman-Kac representation of the solution and also an application of Theorem~\ref{T1.1} to the obstacle problems for
fractional Laplacian.

\begin{theorem}\label{T1.4}
There exists a universal constant $c$, depending on $d, \alpha$ but not on $\cD$, such that for any solution $u$ of \eqref{E1.2} we have
\begin{equation}\label{ET1.4A}
\sup_{\cD}\,\abs{u}\;\leq \; c\, \norm{V}^{\nicefrac{d}{2\alpha}}_\infty\,\left[\int_{\cD} \abs{u}^2(y)\, \D{y} \right]^{\nicefrac{1}{2}}\,.
\end{equation}
Moreover, if $\phi_{1, \alpha}$ is the principal eigenfunction of $(-\Delta)^{\nicefrac{\alpha}{2}}$ then for a universal constant $c$, 
depending only on $d, \alpha$,
we have 
\begin{equation}\label{ET1.4B}
\sup_{\cD}\, \phi_{1,\alpha}\;\leq \; c\, \frac{1}{(\mathrm{inradius}(\cD))^{\nicefrac{d}{2}}}\,\left[\int_{\cD} (\phi_{1,\alpha})^2(y)\, \D{y} \right]^{\nicefrac{1}{2}}\,.
\end{equation}
\end{theorem}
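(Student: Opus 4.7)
The plan for \eqref{ET1.4A} is to evaluate the Feynman--Kac identity \eqref{E1.3} at a maximizer $x_0$ of $\abs{u}$, bound the right-hand side by a heat-kernel convolution of $\abs{u}$, apply Cauchy--Schwarz, and finally plug in the on-diagonal heat-kernel bound from \eqref{E1.1} before optimizing in $t$. Without loss of generality $u(x_0)>0$; since $u\equiv 0$ on $\cD^c$, the integrand in \eqref{E1.3} vanishes on $\{\uptau(\cD)\leq t\}$, so for every $t>0$
\begin{equation*}
u(x_0)
\;=\;\Exp_{x_0}\Bigl[\E^{\int_0^{t} V(X_s)\,\D{s}}\,u(X_t)\,\Ind_{\{\uptau(\cD)>t\}}\Bigr]
\;\leq\; \E^{t\norm{V}_\infty}\int_{\cD} p(t,x_0,y)\,\abs{u(y)}\,\D{y}.
\end{equation*}
Cauchy--Schwarz then gives $u(x_0)\leq \E^{t\norm{V}_\infty}\,\norm{p(t,x_0,\cdot)}_{L^2(\Rd)}\,\norm{u}_{L^2(\cD)}$.

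Next I would use \eqref{E1.1}, which delivers $\sup_{y\in\Rd}p(t,x_0,y)\leq C t^{-\nicefrac{d}{\alpha}}$ for $\alpha\in(0,2)$ (the same bound is immediate from the Gaussian kernel when $\alpha=2$). Combined with the trivial $\int_{\Rd}p(t,x_0,y)\,\D{y}\leq 1$,
\begin{equation*}
\norm{p(t,x_0,\cdot)}_{L^2(\Rd)}^2\;\leq\; \Bigl(\sup_{y\in\Rd}p(t,x_0,y)\Bigr)\int_{\Rd}p(t,x_0,y)\,\D{y}\;\leq\; C t^{-\nicefrac{d}{\alpha}}.
\end{equation*}
Hence $u(x_0)\leq C^{\nicefrac{1}{2}}\,\E^{t\norm{V}_\infty}\,t^{-\nicefrac{d}{2\alpha}}\,\norm{u}_{L^2(\cD)}$ for every $t>0$, and balancing the two factors by choosing $t=1/\norm{V}_\infty$ immediately delivers \eqref{ET1.4A}.

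For \eqref{ET1.4B} I would specialize to $u=\phi_{1,\alpha}$ with the constant potential $V\equiv-\lambda_{1,\alpha}(\cD)$, so that $\norm{V}_\infty=\lambda_{1,\alpha}(\cD)$, and then upper-bound this eigenvalue by the inradius via monotonicity and scaling. If $B\subset\cD$ is an inscribed ball of radius $r=\mathrm{inradius}(\cD)$ (taking a slightly smaller ball and passing to the limit if needed), the variational characterization of the principal Dirichlet eigenvalue gives $\lambda_{1,\alpha}(\cD)\leq\lambda_{1,\alpha}(B)$, and the $r^{-\alpha}$-scaling of $(-\Delta)^{\nicefrac{\alpha}{2}}$ yields $\lambda_{1,\alpha}(B)=\lambda_{1,\alpha}(B_1(0))\,\mathrm{inradius}(\cD)^{-\alpha}$. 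Inserting $\norm{V}_\infty^{\nicefrac{d}{2\alpha}}\leq c_0\,\mathrm{inradius}(\cD)^{-\nicefrac{d}{2}}$ into \eqref{ET1.4A} then gives \eqref{ET1.4B}.

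The argument is a short chain built entirely on results already in the paper, and the key ingredient is the sharp on-diagonal heat-kernel bound, which is precisely what forces the correct exponent $\nicefrac{d}{2\alpha}$ out of the Cauchy--Schwarz step; there is no genuine analytic obstacle. The only mild technicalities I would need to attend to are (i) handling $\alpha=2$ uniformly via the Gaussian bound and (ii) in the second part, the minor open/closed subtlety when placing the inscribed ball, which is settled by interior approximation.
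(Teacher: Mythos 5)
Your proof of \eqref{ET1.4A} is correct and is essentially the paper's argument: Feynman--Kac at the maximizer, discard the exit-time indicator, Cauchy--Schwarz, the on-diagonal bound $p(t,x_0,y)\leq C t^{-\nicefrac{d}{\alpha}}$ from \eqref{E1.1}, and the choice $t=\nicefrac{1}{\norm{V}_\infty}$; the only cosmetic difference is that you apply Cauchy--Schwarz to the integral $\int p\,\abs{u}$ (using $\norm{p(t,x_0,\cdot)}_{L^2}^2\leq \norm{p}_\infty\norm{p}_{L^1}$), whereas the paper writes $\Exp_{x_0}[\abs{u}(X_t)]\leq \Exp_{x_0}[\abs{u}^2(X_t)]^{\nicefrac{1}{2}}$ and then bounds the density --- the two computations are identical in substance. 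For \eqref{ET1.4B} your route genuinely differs: the paper bounds $\lambda_{1,\alpha}(\cD)^{\nicefrac{1}{\alpha}}\leq \lambda_{1,2}(\cD)^{\nicefrac{1}{2}}\leq\lambda_{1,2}(B_\rho)^{\nicefrac{1}{2}}$ using the comparison of fractional and classical Dirichlet eigenvalues from \cite{Chen-Song} together with domain monotonicity and scaling for the Laplacian, while you stay entirely within the fractional setting, using monotonicity of $\lambda_{1,\alpha}$ under domain inclusion (immediate from the variational formula, since $X_0(B)\subset X_0(\cD)$) and the $r^{-\alpha}$ dilation scaling of $(-\Delta)^{\nicefrac{\alpha}{2}}$. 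Your version is more self-contained (no external comparison theorem needed) and gives the same $\mathrm{inradius}^{-\nicefrac{d}{2}}$ dependence, merely with a different constant; the paper's detour through $\lambda_{1,2}$ matches the tools it already invokes elsewhere (Corollary~\ref{C1.2}). One trivial slip: in the normalization \eqref{E1.2} the eigenvalue equation $(-\Delta)^{\nicefrac{\alpha}{2}}\phi_{1,\alpha}=\lambda_{1,\alpha}\phi_{1,\alpha}$ corresponds to $V\equiv+\lambda_{1,\alpha}$, not $-\lambda_{1,\alpha}$; this does not affect anything since only $\norm{V}_\infty=\lambda_{1,\alpha}$ enters the bound.
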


Note that the above estimate is a reverse H\"{o}lder inequality for the eigenfunctions. For $\alpha=2$, \eqref{ET1.4B} was obtained by Chiti \cite{Chiti-1, Chiti-2} with an optimal constant $c$. Later 
van den Berg \cite{VanDenBerg} conjectured  existence of a constant $c$, dependent only on $d$, such that for any convex set $\cD\subset\Rd$ with inradius $\rho$ and diameter $D$ one has
$$ \sup_{\cD}\,\abs{u} \;\leq \; c \frac{1}{\rho^{\nicefrac{d}{2}}}\, \Bigl(\frac{\rho}{D}\Bigr)^{\nicefrac{1}{6}}\, \left[\int_{\cD} \abs{u}^2(y)\, \D{y} \right]^{\nicefrac{1}{2}}\,,$$
where $u$ is the principal eigenfunction of $-\Delta$ in $\cD$. Recently, this conjecture is established
in the special case of two dimension by Georgiev, Mukherjee \& Steinerberger \cite{GMS}. Theorem~\ref{T1.4} can be seen as a generalization to
the Chiti's inequality to the eigenfunctions of fractional Laplacian.

\begin{proof}[Proof of Theorem~\ref{T1.4}]
 If $u=0$ there is nothing to prove. Thus, we may assume that $u(x_0)=\sup_{\cD}\abs{u}>0$. It is easy to see from \eqref{E1.3} that
\begin{align*}
\sup_{\cD}\,\abs{u}\;=\; u(x_0) &\leq \; \E^{t\, \norm{V}_\infty} \Exp_{x_0}\left[\abs{u}(X_t) \Ind_{\{t<\uptau\}}\right]\, 
\\[3pt]
&\leq \; \E^{t\, \norm{V}_\infty} \Exp_{x_0}\left[\abs{u}^2(X_t)\right]^{\nicefrac{1}{2}}
\\[3pt]
&\leq \; \E^{t\, \norm{V}_\infty} \left[\int_{\Rd} \abs{u}^2(y) p(t, x_0, y)\, \D{y}\right]^{\nicefrac{1}{2}}, \quad \forall\; t\; >0\,.
\end{align*}
Since $p(t, x, y)\leq C\, t^{-\nicefrac{d}{\alpha}}$ for all $(t, x, y)\in (0, \infty)\times\Rd\times\Rd$, by \eqref{E1.1}, choosing $t=\frac{1}{\norm{V}_\infty}$ above, we obtain \eqref{ET1.4A}.

To prove the second part, we consider the principal eigenpair $(\phi_{1,\alpha}, \lambda_{1, \alpha})$. Therefore, we have $V=\lambda_{1, \alpha}$ in \eqref{ET1.4A}. Then \eqref{ET1.4B} follows from the following estimate
$$\lambda^{\nicefrac{1}{\alpha}}_{1, \alpha}\;\leq\; \lambda^{\nicefrac{1}{2}}_{1, 2} \;\leq\; \lambda^{\nicefrac{1}{2}}_{1, 2}(B_\rho)
\;=\; \frac{1}{\rho}\, \lambda_{1, 2}(B_1(0)),$$
where $\rho=\mathrm{inradius}(\cD)$, $B_\rho$ is a ball of radius $\rho$ inscribed in $\cD$, $\lambda_{1, 2}(B_1(0))$ denotes the first eigenvalue of $-\Delta$ in $B_1(0)$, the first inequality above follows from \cite[Theorem~3.4 and Remark~3.5]{Chen-Song}  and the
second inequality uses monotonicity of principal eigenvalues of Laplacian with respect to the domains. 
\end{proof}

In the rest of this section, we discuss an application of Theorem~\ref{T1.1} in obstacle problems (see \cite{Har-Kro-Paw, GM-1}). To introduce the problem we recall the variational representation of the
principal eigenvalue of the fractional Laplacian operator in a domain $\cD$ i.e., for $\alpha\in(0, 2)$,
$$\lambda_{1, \alpha}(\cD) = \frac{c_{d, \alpha}}{2}\, \inf_{u\in X_0(\cD)} \, \frac{\int_{\Rd\times\Rd}\frac{\abs{u(x)-u(y)}^2}{\abs{x-y}^{d+\alpha}} \D{x}\,\D{y}}{\int_{\cD}\abs{u(x)}^2 \, \D{x}}\,,$$
where 
$$X_0(\cD)\df\{u \in H^{\frac{\alpha}{2}}(\Rd)\;:\; u\neq 0\; \text{and}\; u=0\; \text{in}\; \cD^c\}\,,$$
and $H^{\frac{\alpha}{2}}(\Rd)$ is the fractional Sobolev space.
$\lambda_{1, 2}$ is also defined similarly where the fractional integral in the numerator is replaced by the second moment of $\grad u$ in $\cD$. Let $B_r(x)$ be the ball of radius $r$ around $x$. For a given bounded domain $\cD$ the classical obstacle problem determines the optimal location of the \textit{obstacle} $B_r(x)$ so that the value of $\lambda_{1, \alpha}(\cD\setminus B_r(x))$ is minimized/maximized.
For the case $\alpha=2$ these problems have been studied extensively. See for instance \cite{Har-Kro-Paw, GM-1} and the references therein. Recently, Georgiev \& Mukherjee 
\cite{GM-1} have used Theorem~\ref{T1.1} for the case
$\alpha=2$ to \textit{predict} the location of a convex obstacle $\sB$ which maximizes the value of $\lambda_{1, 2}(\cD\setminus\sB)$. Using Theorem~\ref{T1.1}
 we can generalize their result to any $\alpha\in (0, 2]$.
Fix a domain $\cD\in\sD(d, \beta)$ where $\sD(d, \beta)$ is same as in Theorem~\ref{T1.1}. Let $\phi_{1, \alpha}$ be the principal eigenfunction of $(-\Delta)^{\nicefrac{\alpha}{2}}$ with principal eigenvalue
$\lambda_{1, \alpha}$. Define 
$$\sM\df \{x\in\cD\; :\; \phi_{1, \alpha}(x)=\max_{\cD}\, \phi_{1, \alpha}\}\,.$$
Then the following result follows along the lines of \cite[Theorem~4.1]{GM-1} and using the monotonicity of $\lambda_{1, \alpha}$ with respect to the domain.

\begin{theorem}\label{T1.5}
Let $\cD\in\sD(d, \beta)$ and $\alpha\in(0, 2]$. Then there exists a universal constant $c$, depending on $\alpha, \beta, d$ but not on $\cD$, such that for any bounded, convex set $\sB$ if we have
$\lambda_{1, \alpha}(\cD\setminus (x+\sB))\geq c\ \lambda_{1, \alpha}(\cD)$ for some $x$, then 
$$\max_{z\in \sM}\, \dist(z, x+\sB)\;=\;0\,.$$
\end{theorem}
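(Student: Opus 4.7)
The plan is to argue the contrapositive: supposing some maximizer $z \in \sM$ lies outside $\overline{x+\sB}$, I would derive an explicit upper bound $\widetilde\lambda \leq C\,\lambda_{1,\alpha}(\cD)$ with $C = C(d,\alpha,\beta)$, where $\widetilde\lambda \df \lambda_{1,\alpha}(\cD \setminus (x+\sB))$; the theorem will then hold for any $c > C$.

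First I would observe that $\phi_{1,\alpha}$ satisfies \eqref{E1.2} on $\cD$ with the constant potential $V \equiv -\lambda_{1,\alpha}(\cD)$, so $\norm{V}_\infty = \lambda_{1,\alpha}(\cD)$. Since $\cD \in \sD(d,\beta)$, applying Theorem~\ref{T1.1} at any $z \in \sM$ yields
\begin{equation*}
\dist(z, \partial\cD) \;\geq\; c_2 \,\lambda_{1,\alpha}(\cD)^{-\nicefrac{1}{\alpha}} \;\eqqcolon\; R_0
\end{equation*}
for some $c_2 = c_2(d,\alpha,\beta) > 0$; in particular $B_{R_0}(z) \subset \cD$.

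Now suppose $\delta \df \dist(z, x+\sB) > 0$. By convexity of $\sB$, pick a closest point $y \in x+\sB$ to $z$ and set $\nu = (z-y)/\abs{z-y}$; the supporting hyperplane $H = \{w : \langle w-y, \nu\rangle = 0\}$ of $x+\sB$ at $y$ separates $z$ (at distance $\delta$ from $H$) from $x+\sB$. Writing $H^+ = \{w : \langle w-y, \nu\rangle > 0\}$ for the open half-space containing $z$, convexity gives $(x+\sB) \cap H^+ = \emptyset$, so
\begin{equation*}
B_{R_0}(z) \cap H^+ \;\subset\; \cD \setminus (x+\sB) \;=\; \widetilde\cD.
\end{equation*}
A direct one-variable optimisation---placing a candidate inscribed ball with centre on the line $\{z + t\nu : t\in\RR\}$ and enforcing the two affine constraints---shows that this truncated ball contains a Euclidean ball of radius $\min\bigl(R_0,\,(R_0+\delta)/2\bigr) \geq R_0/2$, centred at $z + \tfrac{R_0-\delta}{2}\,\nu$ when $\delta \leq R_0$ and at $z$ itself when $\delta \geq R_0$.

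Combining monotonicity of $\lambda_{1,\alpha}$ under domain inclusion with the scaling $\lambda_{1,\alpha}(B_r) = r^{-\alpha}\,\lambda_{1,\alpha}(B_1(0))$, it follows that
\begin{equation*}
\widetilde\lambda \;\leq\; \lambda_{1,\alpha}\bigl(B_{R_0/2}\bigr) \;=\; \Bigl(\frac{2}{R_0}\Bigr)^{\!\alpha}\,\lambda_{1,\alpha}(B_1(0)) \;=\; \frac{2^\alpha \,\lambda_{1,\alpha}(B_1(0))}{c_2^\alpha}\,\lambda_{1,\alpha}(\cD).
\end{equation*}
Hence choosing $c = c(d,\alpha,\beta) > 2^\alpha c_2^{-\alpha}\,\lambda_{1,\alpha}(B_1(0))$ contradicts the hypothesis $\widetilde\lambda \geq c\,\lambda_{1,\alpha}(\cD)$, proving the contrapositive. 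The only substantive new ingredient beyond Theorem~\ref{T1.1} together with standard monotonicity and scaling of $\lambda_{1,\alpha}$ is the geometric observation that convexity of $\sB$ lets us inscribe a ball of radius comparable to $R_0$ in $B_{R_0}(z)\cap \widetilde\cD$; this truncated-ball lemma is the step I expect to require the most care, though it ultimately reduces to an elementary calculation and is where the factor $2^\alpha$ enters the final constant.
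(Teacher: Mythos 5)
Your proof is correct and follows essentially the route the paper intends (the paper only cites \cite[Theorem~4.1]{GM-1} together with domain monotonicity of $\lambda_{1,\alpha}$): apply Theorem~\ref{T1.1} to $\phi_{1,\alpha}$ to get a ball of radius $\sim\lambda_{1,\alpha}(\cD)^{-\nicefrac{1}{\alpha}}$ around any maximizer in $\sM$, use convexity of the obstacle (supporting hyperplane at the nearest point) to keep at least a half-ball, hence an inscribed ball of comparable radius, inside $\cD\setminus(x+\sB)$, and conclude by monotonicity and scaling of $\lambda_{1,\alpha}$. The only nitpick is the sign convention: in \eqref{E1.2} the principal eigenfunction corresponds to $V=+\lambda_{1,\alpha}(\cD)$ rather than $-\lambda_{1,\alpha}(\cD)$, which is immaterial since only $\norm{V}_\infty$ enters your argument.
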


\section{Proofs of Theorem~\ref{T1.1},\ref{T1.2} and~\ref{T1.3}}\label{S-proof}
We being with the proof of Theorem~\ref{T1.1}
\begin{proof}[Proof of Theorem~\ref{T1.1}]
We assume that $\sup_{x\in\cD}\abs{V(x)}$ is finite, otherwise there is nothing to prove. Without loss of generality
we assume that $u(x_0)>0$, otherwise multiply $u$ by $-1$.  Let us introduce a scaling that would reduce the
problem to the case where $\dist(x_0, \partial\cD)=1$. Note that we may 
assume $x_0=0$, otherwise we translate the domain. Let $R=\dist(0, \partial\cD)$. Defining $w(x)=u(Rx),$ we find that
$$-(-\Delta)^{\nicefrac{\alpha}{2}} w + V_R\, w=0, \quad \text{in}\; \cD_R\df\frac{1}{R}\cD, \quad \text{where}\; V_R(x)=R^\alpha\,V(Rx)\,.$$
This gives us $\dist(0, \cD_R)=1$. Also note that for any $z\in\partial \cD_R$, we have
$$\abs{\cD^c_R\cap B_r(z)}\geq \beta \abs{B_r(z)},\quad \text{for all}\; r>0\,,$$
where $B_r(z)$ is the ball of radius $r$ around $z$. 
Using the above scaled equation in \eqref{E1.4}, we obtain
\begin{equation}\label{ET1.1D}
\E^{t\, \norm{V_R}_\infty} \Prob_{0}(\uptau_R>t)\geq 1, \quad \forall \; t>0\,,
\end{equation}
where $\uptau_R=\uptau(\cD_R)$ is the exit time from the domain $\cD_R$. Now to complete the proof we shall find  $\delta\in(0, 1)$, independent of $R$, such that 
$$\Prob_{0}(\uptau_R> 1)\leq 1-\delta.$$
Then the proof would follow from \eqref{ET1.1D} with $c=\log \frac{1}{1-\delta}$. Let $z_0\in\partial\cD_R$ be such that $\abs{z_0}=1=\dist(0, \partial\cD_R)$.
Let $\sB(z_0)$ be the ball of  radius $\frac{1}{2}$ around $z_0$ and $A=\cD^c_R\cap \sB(z_0)$. By our assertion we have $\abs{A}\geq \beta \abs{\sB(z_0)}$. Since $A\subset \cD^c_R$ we have 
$$\inf_{y\in A}\,\abs{y}\;\geq\; 1, \quad \text{and}\quad \sup_{y\in A}\, \abs{y}\leq 3/2\,.$$
Therefore, it is easy to see from 
\eqref{E1.1} that for $\alpha\in (0, 2)$,
$$\Prob_0(X_1\in A)\; \geq \frac{1}{C}\int_A \frac{1}{\abs{y}^{d+\alpha}}\, \D{y}\geq \frac{1}{C (\nicefrac{3}{2})^{d+\alpha}}\, \abs{A}=
\frac{\beta\,\omega_d\, ({\nicefrac{1}{2}})^d}{C\,2^d\, (\nicefrac{3}{2})^{d+\alpha}}\,,$$
where $\omega_d$ is the volume of the unit ball in $\Rd$. Let us define $\delta=\frac{\beta\,\omega_d}{C2^d 3^{d+\alpha}}$. Thus
$$\Prob_{0}(\uptau_R>1)= 1-\Prob_0(\uptau_R\leq 1)\leq 1- \Prob_0(X_1\in A)\leq \;1-\delta\,.$$
A similar estimate is also possible for $\alpha=2$. This completes the proof.
\end{proof}

\begin{proof}[Proof of Corollary~\ref{C1.2}]
We only consider the case when the supremum on the RHS of \eqref{EC1.2A} is finite. Define
$$V(x)\df \;\frac{-(-\Delta)^{\nicefrac{\alpha}{2}} u(x)}{u(x)}\,, \quad x\in\cD\,.$$
Note that $V$ need not to be defined pointwise. But we can still apply Theorem~\ref{T1.1}. This can be done as follows. For $\delta>0$, we define
$$\cD_{-\delta}=\{y\in\cD\;:\; \dist(y, \partial\cD)<\delta\}, \quad \text{and},\quad \uptau_\delta=\uptau(\cD_{-\delta})\,.$$
Pick a point $x_0\in\cD$ where $\abs{u}$ attends its maximum and $u(x_0)>0$. Consider $\delta\ll\dist(x_0, \partial\cD)$. Now for $0<\varepsilon\ll\delta$,
we consider a mollifier $\varrho_\varepsilon$ supported in $B_\varepsilon(0)$, and define $V_\varepsilon=V\ast \varrho_\varepsilon$. Also define
$$F_\varepsilon(x)\;\df\;-(-\Delta)^{\nicefrac{\alpha}{2}}u(x)-V_\varepsilon(x) u(x)\,.$$ 
Since $V_\varepsilon$
is smooth in $D_{-\delta}$, by Feynman-Kac formula we get  that for any $t>0$
\begin{equation*}
\Exp_{x_0}\Bigl[\E^{\int_0^{t\wedge\uptau_\delta} -V_\varepsilon(X_s)\, \D{s}} u(X_{t\wedge\uptau_\delta})\Bigr] - u(x_0)\;=\;
\Exp_{x_0}\Bigl[\int_0^{t\wedge\uptau_\delta}\E^{\int_0^{s} -V_\varepsilon (X_p)\, \D{p}} F_\varepsilon(X_{s})\D{s}\Bigr]\,,
\end{equation*}
which implies
\begin{align}\label{EC1.2B}
u(x_0) &\;\leq\; \E^{t\,\norm{V}_\infty}\Exp_{x_0}\Bigl[\abs{u}(X_{t\wedge\uptau_\delta})\Bigr] +\,
\E^{t\,\norm{V}_\infty}\, \Exp_{x_0}\Bigl[\int_0^{t} \Ind_{\cD_{-\delta}}(X_s)\abs{F_\varepsilon(X_{s})}\D{s}\Bigr]\nonumber
\\[5pt]
&=\;\E^{t\,\norm{V}_\infty}\Exp_{x_0}\Bigl[\abs{u}(X_{t\wedge\uptau_\delta})\Bigr] +\,
\E^{t\,\norm{V}_\infty}\, \int_0^{t}\int_{\Rd} \Ind_{\cD_{-\delta}}(y)\abs{F_\varepsilon(y)} p(s, x_0, y)\, \D{y}\,\D{s}\,.
\end{align}
Since $F_\varepsilon$ is uniformly bounded in $\cD_{-\delta}$ and $V_\varepsilon\to V$, as $\varepsilon\to 0$, almost everywhere in $\cD_{-\delta}$ 
(and therefore, $F_\varepsilon\to 0$ almost everywhere), 
we get from 
\eqref{EC1.2B} by letting $\varepsilon\to 0$, that
$$u(x_0) \;\leq\; \E^{t\,\norm{V}_\infty}\Exp_{x_0}\Bigl[\abs{u}(X_{t\wedge\uptau_\delta})\Bigr]\,.$$
Now let $\delta\to 0$, and use the fact $\uptau_\delta\to\uptau(\cD)$ almost surely as $\delta\to 0$, to obtain \eqref{E1.4}. Hence we can apply 
the arguments of Theorem~\ref{T1.1} to reach at the same conclusion as in Theorem~\ref{T1.1}.

All the constants involved in the estimate below depend only on $d, \alpha$ and not on the domain. 
\begin{align*}
\esssup_{x\in\cD}\left|\frac{-(-\Delta)^{\nicefrac{\alpha}{2}} u(x)}{u(x)}\right|^{-\nicefrac{1}{\alpha}}&\;\leq\; c\; \dist(x_0, \partial\cD)
\\[1pt]
&\;\leq \; c\;\mbox{inradius} (\cD)
\\[1pt]
&\;\leq\; c_1 \frac{1}{\lambda_{1, 2}(\cD)^{\nicefrac{1}{2}}}
\\[1pt]
&\;\leq\; c_1 \frac{1}{\lambda_{1, \alpha}(\cD)^{\nicefrac{1}{\alpha}}}\,,
\end{align*}
where the first inequality follows from Corollary~\ref{C1.1}, third inequality follows from \cite[Theorem~2]{Hayman} and the fourth inequality follows
from \cite[Theorem~3.4, Remark~3.5]{Chen-Song}. Hence the proof.
\end{proof}

Now we prove Theorem~\ref{T1.2}. Let us again remind the readers  that the proof below is essentially the arguments of \cite{RacSte}. We just give a rigorous proof of the upper bound
estimate of the probability in \eqref{E1.4}.

\begin{proof}[Proof of Theorem~\ref{T1.2}]
As in Theorem~\ref{T1.1}, we assume that $x_0=0$ and $\dist(x_0, \partial \cD_R)=1$ where $\cD_R=\frac{1}{R}\cD$ and $R=\dist(x_0, \cD)$. Let $z_0\in \partial\cD_R$ be such that
$\abs{z_0}=1$. From \eqref{ET1.1D} we obtain
\begin{equation}\label{ET1.2B}
\E^{t\norm{V_R}_\infty} \Prob_{0}(\uptau_R>t)\;\geq\; 1, \quad \forall \; t>0\,,
\end{equation}
where $\norm{V_R}_\infty=R^2\norm{V}_\infty$. Here $\uptau_R$ denotes the exit time of the Brownian motion $W$ from $\cD_R$. 
Fix $t=1$ above. To complete the proof we need to show that for some $\delta\in(0,1)$, independent of $\cD$, we have
\begin{equation}\label{ET1.2C}
\Prob_{0}(\uptau_R>1)\;\leq\; 1-\delta\,.
\end{equation}
Note that the bound in Theorem~\ref{T1.1} depends on $\beta$, and therefore, the arguments of Theorem~\ref{T1.1} is not applicable here. 
We use the property of simply connectedness of $\cD$ to get this estimate.
Without loss of generality we may assume that
$z_0$ lies on the $x$-axis, 
otherwise we rotate the domain around $0$. Consider two concentric circles, centered at $0$, of radii $5/4$ and $7/4$, respectively.
 Let $\varphi:[0, 1]\to\RR^2$ be a $\cC^1$ curve with 
the following properties (see Figure~\ref{Fig-1})
\begin{figure}
\centering
\def\svgwidth{0.3\columnwidth}
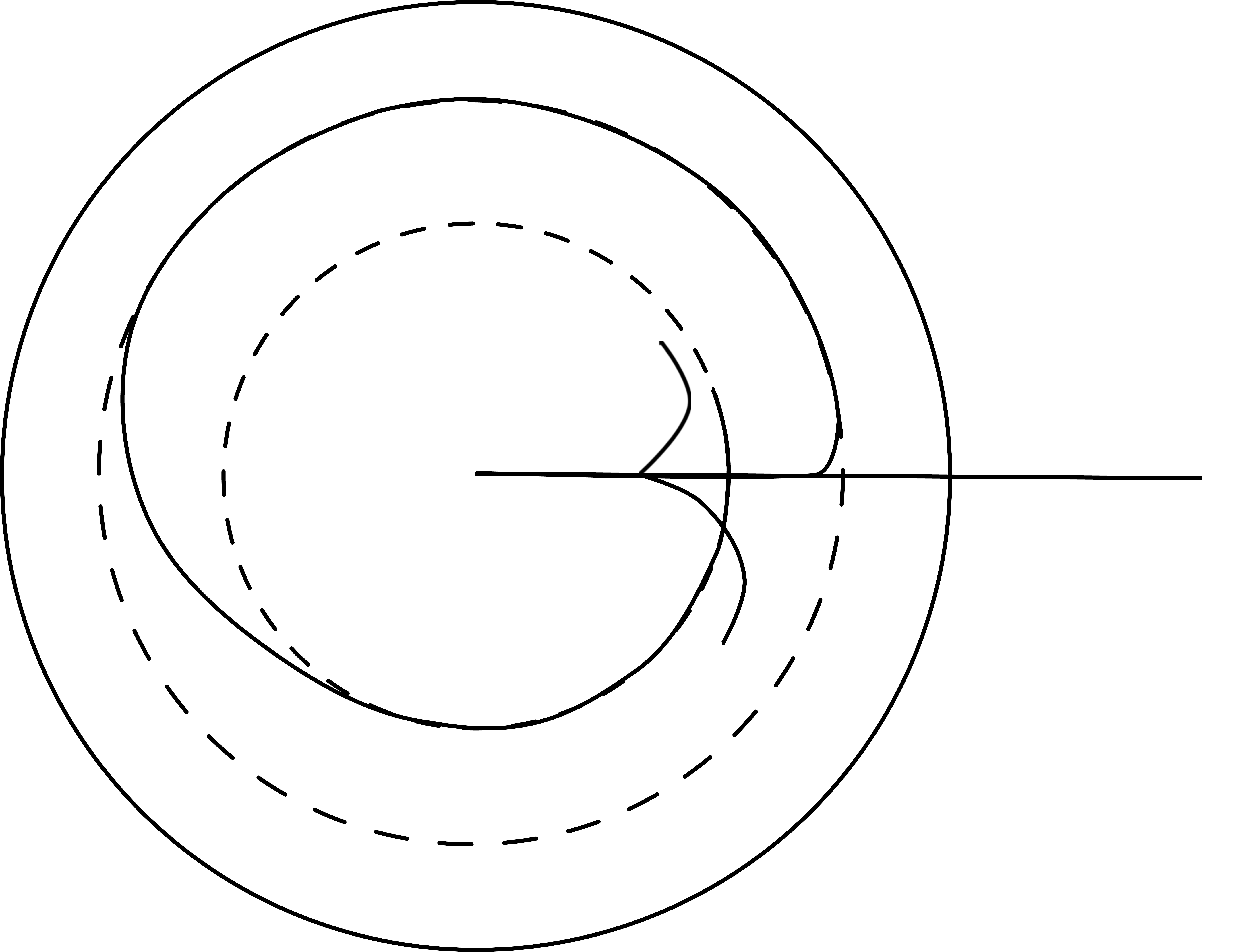
\caption{$d=2$. Dotted lines represent two concentric circles of radii $5/4$ and $7/4$, respectively.}\label{Fig-1}
\end{figure}
\begin{itemize}
\item[(i)] $\varphi(0)=0$, $z_0\in \varphi[0, \frac{1}{16}]$ and $z_2=\varphi(1)$ lies on the circle of radius $5/4$;
\item[(ii)] $\varphi$ intersects itself only at one point, say $\varphi(\frac{1}{4})=\varphi(\frac{3}{4})$, lying on the circle of radius $5/4$;
\item[(iii)] There is an $\varepsilon\in(0, \frac{1}{16})$ such that any curve that belongs to 
$$N_\varepsilon=\{\psi\in \cC([0, 1], \RR^2)\; :\; \sup_{s\in[\frac{1}{8}, \frac{7}{8}]}\abs{\psi(s)-\varphi(s)}<\varepsilon\},$$
gives rise to a closed curve having $z_0$ inside.
\end{itemize}
By Stroock-Varadhan support theorem we know that 
$$\delta\df \Prob_0(W\in N_\varepsilon)\;>\; 0.$$
Note that $\delta$ does not depend on $\cD$. We claim that 
\begin{equation}\label{ET1.2D}
W\in N_\varepsilon\; \Rightarrow \; \uptau_R\;\leq\; 1\,.
\end{equation}
Then \eqref{ET1.2D} implies \eqref{ET1.2C}. Therefore, we only need to establish the above claim. Let $\psi\in N_\varepsilon$. We show that $\psi\cap\cD^c_R\neq\emptyset$ which implies
\eqref{ET1.2D}. Suppose that $\psi\cap\cD^c_R=\emptyset$. We show that there exists a simple closed curve $\xi$ in $\cD$ containing
$z_0$ inside. But this would contradict the simply connectedness of $\cD$. To do so, we define 
$$B_\varepsilon=\left\{x\in \RR^2\; :\; \dist\left(x, \varphi\left[\frac{1}{8}, \frac{7}{8}\right]\right)\;<\;\varepsilon\right\}.$$
We may choose $\varepsilon$ small enough so that $z_0\notin B_\varepsilon$.
 By the property (iii) we can find a closed curve $\gamma:[a, b]\to\cD$
that sits inside  $B_\varepsilon$. This $\gamma$ need not be simple and may have many
small loops inside the tubular part. The required curve $\xi$ can be constructed from $\gamma$ as follows. For every $t\in [a, b]$ we can find $\eta=\eta(t)>0$ and a small disc
$B(\gamma(t))$ around $\gamma(t)$ such that for $s\in (t-\eta, t+\eta)$ we have $\gamma(s)\in B(\gamma(t))\subset \cD\cap B_\varepsilon$. This is possible
as $\gamma$ is continuous and $\cD\cap B_\varepsilon$ is open. Applying compactness of $[a, b]$ we can find $\{t_i\}_{i=0}^k$ such that
$\{(t_i-\eta_i, t_i+\eta_i)\;:\; 0\leq i\leq k\}$ covers $[a, b]$ and $\{B(\gamma(t_i))\; :\; 0\leq i\leq k\}$ forms a chain of overlapping discs in
$\cD\cap B_\varepsilon$. Therefore we can easily construct a piecewise linear closed curve $\tilde\xi$ from $[a, b]$ which is linear, or has
atmost one point of non-differentiability, in each disc $B(\gamma(t_i))$. Since $\tilde\xi$ can have only finitely many small loops, we can erase those loops to form our required simple closed curve $\xi$ in $\cD\cap B_\varepsilon$. This completes the proof.
\end{proof}

Now we remain to prove Theorem~\ref{T1.3}. To do this we need the following lemma.
This result is pretty standard but we include the proof to make this article self-contained.

\begin{lemma}\label{L2.1}
Let $\alpha\in (0, 2)$.
There is a constant $\kappa_1>1$, depending on $\alpha, d$, such that for any $r>0$ and $t>0$ we have
\begin{equation*}
\Prob_{x}(\uptau(B_r(x))\leq t)\;\leq\; \kappa_1 \, t\, r^{-\alpha}\,,
\end{equation*}
for all $x\in\Rd$.
\end{lemma}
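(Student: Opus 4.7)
The plan is first to reduce the general statement to the normalized case $r=1$ via translation invariance and the scaling property of the $\alpha$-stable process. By translation invariance of the law we may take $x=0$; and since the characteristic exponent of $X$ is $|\xi|^{\alpha}$, the process $\{r^{-1}X_{r^\alpha s}\}_{s\ge 0}$ under $\Prob_0$ has the same law as $\{X_s\}_{s\ge 0}$ under $\Prob_0$. Hence $\uptau(B_r(0))$ has the same distribution as $r^\alpha \uptau(B_1(0))$ under $\Prob_0$, so it suffices to find $\kappa_1$ with
$$\Prob_0\bigl(\uptau(B_1(0))\le s\bigr)\;\le\;\kappa_1\, s \quad\text{for all } s>0.$$
For $s\ge 1/\kappa_1$ the bound is automatic (the left-hand side is $\le 1$), so I only need to handle small $s$.

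The next step is to bound the ``displacement probability''
$$q(s)\;\df\;\Prob_0\bigl(\abs{X_s}\ge \tfrac12\bigr)
\;=\;\int_{\abs{z}\ge 1/2} p(s,0,z)\,\D z$$
using the off-diagonal heat-kernel estimate in~\eqref{E1.1}. Because $p(s,0,z)\le C s/\abs{z}^{d+\alpha}$ for $\alpha\in(0,2)$, integrating over $\{\abs{z}\ge 1/2\}$ yields $q(s)\le C's$, with $C'$ depending only on $d,\alpha$.

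The crux of the argument is then a standard strong-Markov step converting this pointwise density estimate into an exit-time estimate. Setting $\uptau=\uptau(B_1(0))$, on the event $\{\uptau\le s\}$ one has $\abs{X_\uptau}\ge 1$. Applying the strong Markov property at $\uptau$ and the translation invariance estimate from the previous paragraph,
$$\Prob_0\bigl(\abs{X_s-X_\uptau}\le\tfrac12 \,\big|\, \mathcal{F}_\uptau\bigr)\;\ge\;1-C'(s-\uptau)\;\ge\;1-C's \quad\text{on } \{\uptau\le s\}.$$
On the intersection $\{\uptau\le s\}\cap\{\abs{X_s-X_\uptau}\le 1/2\}$ one has $\abs{X_s}\ge 1/2$ by the triangle inequality, so
$$\Prob_0(\abs{X_s}\ge \tfrac12)\;\ge\;(1-C's)\,\Prob_0(\uptau\le s).$$
Combining with $\Prob_0(\abs{X_s}\ge 1/2)\le C's$ and restricting to $s\le 1/(2C')$ gives $\Prob_0(\uptau\le s)\le 2C's$; for larger $s$ the bound is trivial by adjusting the constant. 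Choosing $\kappa_1$ accordingly completes the proof.

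The main obstacle is the strong-Markov step: because $X$ is discontinuous, some care is needed to justify that on $\{\uptau\le s\}$ the random variable $X_\uptau$ is $\mathcal{F}_\uptau$-measurable (which follows from right-continuity of paths) and that the regular conditional distribution of $X_{s}-X_\uptau$ given $\mathcal{F}_\uptau$ equals the law of an independent copy of $X$ at time $s-\uptau$ started from~$0$. Once this point is set, the heat-kernel bound and the elementary algebraic manipulation above deliver the desired linear-in-$t$ tail estimate.
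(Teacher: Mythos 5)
Your argument is correct, but it follows a genuinely different route from the paper. You reduce to $r=1$ by the exact scaling of the isotropic $\alpha$-stable process, bound the displacement probability $\Prob_0(\abs{X_s}\ge\tfrac12)\le C's$ by integrating the off-diagonal bound $p(s,0,z)\le Cs\abs{z}^{-d-\alpha}$ from \eqref{E1.1}, and then run the standard strong-Markov ``if you exit early you are still far away at time $s$'' argument to convert the displacement bound into the exit-time bound; all the steps (in particular $\abs{X_\uptau}\ge 1$ on $\{\uptau\le s\}$ by right-continuity, and the conditional law of $X_s-X_\uptau$ given $\mathcal{F}_\uptau$ being that of $X_{s-\uptau}$ under $\Prob_0$) are legitimate for a L\'evy process, and the trivial regime $s\gtrsim 1$ is handled by enlarging the constant. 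The paper instead uses a barrier argument: it builds an explicit $f\in\cC^2_b(\Rd)$ equal to $\abs{y}^2$ near the origin and to $1$ outside the unit ball, checks via the integro-differential formula that $\abs{(-\Delta)^{\nicefrac{\alpha}{2}}f_r}\le \bar\kappa c_{d,\alpha} r^{-\alpha}$ on $\Bar B_r$, and applies Dynkin's (It\^o's) formula to the stopped process, which yields $\Prob_0(\uptau(B_r)\le t)\le c\,t\,r^{-\alpha}$ directly. The trade-off: your proof leans on the two-sided heat-kernel estimate \eqref{E1.1} (a nontrivial input, though already quoted in the paper) and on exact scaling, so it is tailored to the isotropic stable case; the paper's test-function proof is self-contained modulo Dynkin's formula, avoids kernel estimates altogether, and adapts more easily to generators without exact scaling or sharp transition-density bounds. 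Your version, on the other hand, requires no construction of a smooth barrier and no estimate of a fractional Laplacian, which makes it shorter once the kernel bound is granted.
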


\begin{proof}
Note that we may assume $x=0$, without any loss of generality.
Let $f\in\cC^{2}_b(\Rd)$ be such that $f(y)=\abs{y}^{2}$ for $\abs{y}\le \frac{1}{2}$
and $f(y)=1$ for $\abs{y}\ge 1$. Therefore we have a constant $\kappa$ such that
\begin{equation*}
\sup_{y\in \Rd}\;|\grad f(y)|\;\le\; \kappa\,,
\end{equation*}
and for all $y, z \in\Rd$, we have
\begin{equation*}
\babs{f(y+z)-f(z)-\grad f(z)\cdot y}\;\le\; \kappa\, \abs{y}^{2}\,.
\end{equation*} 
Define $f_r(y)=f(\frac{y}{r})$.
Now for $z\in \Bar B_r(0)$ we calculate
\begin{align*}
&\left|\int_{\Rd}
\bigl(f_r(z+y)-f_r(z)-\grad f_r(z)\cdot y\,\bigr)
\frac{1}{\abs{y}^{d+\alpha}}\,\D{y}\right|\\[5pt]
&\;\le\; \left|\int_{|y|\le r}
\bigl(f_r(y+z)-f_r(y)-\grad f_r(z)\cdot y\bigr)
\frac{1}{|y|^{d+\alpha}}\,\D{y}\right|
+\left|\int_{|y|>r}\bigl(f_r(y+z)-f_r(y)\bigr)
\frac{1}{|y|^{d+\alpha}}\,dz\right|\\[5pt]
& \;\le\; \kappa \, \frac{1}{r^{2}}\int_{|y|\le r}|y|^{2-d-\alpha}\,\D{y}
+ 2\int_{|y|>r}|y|^{-d-\alpha}\,\D{y} \\[5pt]
&\;\le\;  \frac{\bar{\kappa}}{r^\alpha}\,,
\end{align*}
for some constant $\bar{\kappa}$. Hence $z\in \Bar B_r(0)$ we have
\begin{equation*}
\left| -(-\Delta)^{\nicefrac{\alpha}{2}} f(z) \right|\;\le\; \frac{c_{d,\alpha}\bar{\kappa}} {r^{\alpha}}\,.
\end{equation*}
Therefore using the It\^o's formula we have for all $t>0$ that
\begin{equation*}
\frac{c_{d,\alpha}\bar{\kappa}}{r^\alpha}\, \Exp_{0}\bigl[\uptau(\Bar B_r(0))\wedge t\bigr]
\;\ge\; \Exp_0\left[\int_0^{\uptau(B_r(0))\wedge t} -(-\Delta)^{\nicefrac{\alpha}{2}} f(X_s)\, \D{s}\right]
\;=\; \Exp_{0}\bigl[f_r(X_{\tau(\Bar B_r(0))\wedge t})\bigr]\,.
\end{equation*}
Since $f_r=1$ on $B^c_r(0)$ we have
$\Prob_{0}(\uptau(\Bar B_r(0))\le t)\le c_{d,\alpha}\bar{\kappa}\,  r^{-\alpha}t$.
The proof follows since $\uptau(\Bar B_r(0))=\uptau(B_r(0))$ almost surely.
\end{proof}

 \begin{figure}[ht]
\centering
\def\svgwidth{0.1\columnwidth}
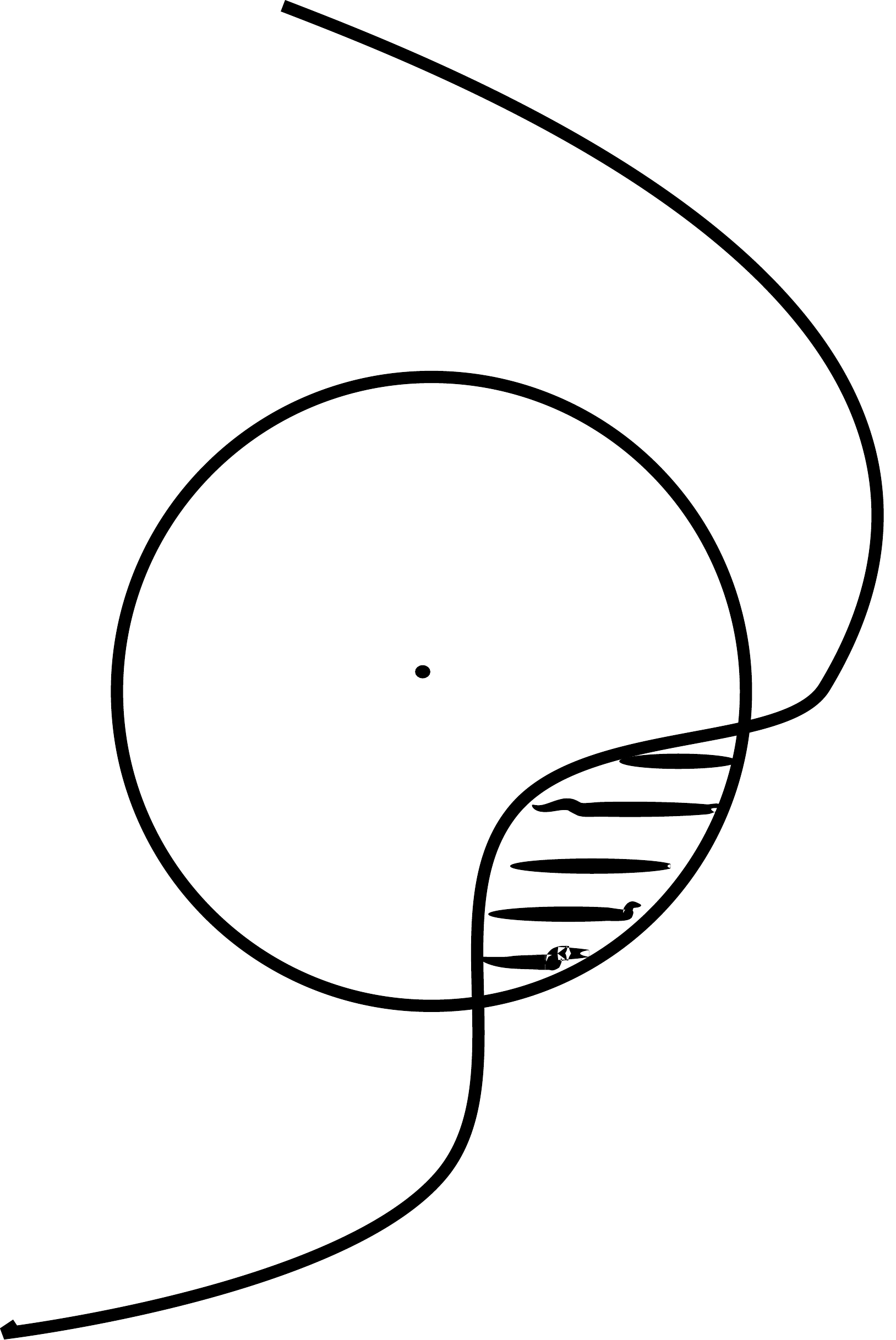
\caption{}\label{Fig-2}
\end{figure}
Now we are ready to prove Theorem~\ref{T1.3}
\begin{proof}[Proof of Theorem~\ref{T1.3}]
As earlier, we assume $u(x_0)>0$. Let $\uptau$ be the exit time of $X$ from $\cD$.
In what follows we denote $\Lambda=\norm{V}_\infty$.
From \eqref{E1.4} we know that
$$\E^{t\,\Lambda} \Prob_{x_0}(\uptau>t)\;\geq\; 1\,,\quad \forall\; t>0\,.$$
Replacing $t$ by $\frac{t}{\Lambda}$ we see that 
\begin{equation}\label{ET1.3A}
\Prob_{x_0}(\uptau>\frac{t}{\Lambda})\;\geq\; e^{-t}\,\quad \forall \; t>0\,.
\end{equation}
By $B_r$ we denote the ball of radius $r$ around $x_0$. 
 Let $E_r=B_r\cap\cD^c$ (see Figure~\ref{Fig-2}).
It is enough to show that for some $r=r_\epsilon\Lambda^{-\nicefrac{1}{\alpha}}$, we have
\begin{equation}\label{ET1.3B}
\abs{E_r}\;\leq\; \epsilon\, \abs{B_{r}}\,.
\end{equation}
We shall use \eqref{ET1.3A} to obtain \eqref{ET1.3B}. Given $\kappa_2\in(0,1/8)$, we can find $t_0$ small enough so that 
\begin{equation}\label{ET1.3C}
\Prob_{x_0}(\uptau\;\leq\;\frac{t_0}{\Lambda} )\leq \kappa_2\,.
\end{equation}
This is possible due to \eqref{ET1.3A}. We shall choose $\kappa_2$ later depending to $\epsilon$. Let $r= t_0^{1/\alpha}\Lambda^{-\nicefrac{1}{\alpha}}$.
By $\uuptau_r=\uptau(E^c_r)$ we denote the hitting time to $E_r$ by $X$. Denote also by $\uptau_r=\uptau(B_r(x_0))$.
We use L\'{e}vy system formula (see for instance, \cite[Proposition~2.3 and Remark~2.4]{Bass-Levin}) to estimate the following probability
\begin{align*}
\Prob_{x_0}(\uuptau_r\leq \uptau_r\wedge t) &=\; \Exp_{x_0}\Bigl[\sum_{s\leq \uuptau_r\wedge \uptau_r\wedge t} \Ind_{\{X_{s-}\neq X_s,\; X_s\in E_r\}}\Bigr]
\\
&=\;c_{d,\alpha} \Exp_{x_0}\Bigl[\int_0^{\uuptau_r\wedge \uptau_r\wedge t} \int_{E_r} \frac{\D{z}}{\abs{X_s-z}^{d+\alpha}}\Bigr]\quad [\text{By L\'{e}vy system formula}]
\\
&\geq\; \frac{c_{d,\alpha}}{2^{d+\alpha}} \Exp_{x_0}\Bigl[\int_0^{\uuptau_r\wedge \uptau_r\wedge t} \frac{\abs{E_r}}{\abs{r}^{d+\alpha}}\Bigr]
\\
&=\; \kappa_3 r^{-\alpha} \frac{\abs{E_r}}{\abs{B_r}} \Exp_{x_0}[\uuptau_r\wedge \uptau_r\wedge t]\,,
\end{align*}
for some constant $\kappa_3$, dependent only on $d, \alpha$.
Now let us find a lower bound for the expectation above. By our choice of $r$ above we have $t r^{-\alpha}=t \frac{\Lambda}{t_0}$. Therefore by choosing
$\hat{t}=\frac{t_0}{4\kappa_1\Lambda}< \frac{t_0}{\Lambda}$ we obtain from Lemma~\ref{L2.1} that
\begin{equation}\label{ET1.3D}
\Prob_{x_0}(\uptau_r \leq \hat{t})\leq \frac{1}{4}\,.
\end{equation}
Fix this choice of $\hat{t}$ which depends on $t_0, \Lambda$. It is also easy to see that
$$\Prob_{x_0}(\uuptau_r \leq \uptau_r\wedge \hat{t})\;\leq\;  \Prob_{x_0}(\uuptau_r\leq  \hat{t})
\;\leq\; \Prob_{x_0}(\uptau\leq  \hat{t})\;\leq\; \Prob_{x_0}(\uptau\leq  \frac{t_0}{\Lambda})\;\leq\; \kappa_2,$$
where the last estimate follows from \eqref{ET1.3C}. Since $\kappa_2\leq 1/8$ we get 
\begin{equation}\label{ET1.3E}
\Prob_{x_0}(\uuptau_r\leq \uptau_r\wedge \hat{t})\;\leq\; 1/8\,.
\end{equation}
Therefore combining \eqref{ET1.3D} and \eqref{ET1.3E} we have
\begin{align*}
\Exp_{x_0}[\uuptau_r\wedge \uptau_r\wedge \hat{t}]&\;\geq\; \hat{t}\, \Prob_{x_0} (\uuptau_r\wedge \uptau_r>\hat{t})
\\
&\geq\; \hat{t}\, [1-\Prob_{x_0} (\uuptau_r\leq \uptau_r\wedge \hat{t})-\Prob_{x_0} ( \uptau_r\leq \hat{t})]
\\
& \geq\; \frac{1}{2}\hat{t}\,.
\end{align*}
Since $\hat{t} r^{-\alpha}=\frac{1}{4\kappa_1}$, we obtain from the above estimates that 
$$\frac{\abs{E_r}}{\abs{B_r}}\;\leq\; \frac{8\kappa_1}{\kappa_3}\Prob_{x_0} (\uuptau_r\leq \uptau_r\wedge \hat{t})\;\leq\; \frac{8\kappa_1}{\kappa_3}\kappa_2\,.$$
where the last estimate is also obtained as before. Now we choose $\kappa_2$ depending on $\varepsilon$ so that 
$$\frac{\abs{E_r}}{\abs{B_r}}\;\leq\; \varepsilon\,,$$
and this gives us $t_0$ and 
$r_\varepsilon= t_0^{1/\alpha}$.

\end{proof}

\section*{Acknowledgements}
Special thanks to my colleague Tejas Kalelkar for teaching me Inkscape which has been used to draw the diagrams of this article. The author is
indebted to Stefan Steinerberger for constructive comments and suggestions.
The research of Anup Biswas was supported in part by an INSPIRE faculty
fellowship and DST-SERB grant EMR/2016/004810.

\bibliographystyle{plain}      

\bibliography{Eigen}
\end{document}